\newtheorem{theorem}{Theorem}
\newtheorem*{conjecture}{Conjecture}
 \numberwithin{equation}{section}
\newcommand{\be}{\begin{equation}}
\newcommand{\ee}{\end{equation}}
\newcommand{\ba}{\begin{eqnarray}}
\newcommand{\ea}{\end{eqnarray}}
\newcommand{\lab}[1]{\label{#1}}
\newcommand{\C}{\mathbb C}
\newcommand{\R}{\mathbb R}
\newcommand{\Z}{\mathbb Z}
\newcommand{\T}{\mathbb T}
\newcommand{\m}{\omega}
\newcommand{\eg}{\Gamma_{\! p,q}}
\newtheorem{defn}{Definition}
\begin{document}

\title[Elliptic hypergeometric terms]
{Elliptic hypergeometric terms}

\thanks{Lectures presented at the Workshop ``Th\'eories
galoisiennes et arithm\'etiques des \'equations
diff\'erentielles" (September 2009, CIRM, Luminy, France).}

\author{V. P. Spiridonov}


 \address{Laboratory of Theoretical Physics, JINR, Dubna, Moscow reg.
141980, Russia;
e-mail address: spiridon@theor.jinr.ru}

\begin{abstract}
General structure of the multivariate plain and $q$-hypergeometric terms
and univariate elliptic hypergeometric terms is described.
Some explicit examples of the totally elliptic hypergeometric
terms leading to multidimensional integrals on root systems,
either computable or obeying non-trivial symmetry
transformations, are presented.
\end{abstract}

\maketitle

{\em Keywords: elliptic hypergeometric functions, elliptic gamma functions,
elliptic beta integrals.}

{\em MSC2010 codes:  33E20, 33E05.}

\tableofcontents

\section{Plain hypergeometric case}

The definition of the hypergeometric series goes
as far back as to Euler and, in a more general setting, to
Pochhammer and Horn \cite{aar,ggr}.

\begin{defn}
The formal series
$$
\sum_{m\in \Z^n} c(m)= \sum_{m\in \Z^n} c(m_1,\dots,m_n)
$$
is called plain hypergeometric series, if the ratios
$$
\frac{c(m_1,\dots,m_i+1,\dots,m_n)}{c(m_1,\dots,m_n)} =R_i(m_1,\dots,m_n)
$$
are rational functions of $m_1,\dots, m_n$.
\end{defn}

Suppose that given rational functions $R_i(m)$, called certificates,
satisfy the consistency conditions
$$
R_i(m_1,\dots ,m_k+1,\dots,m_n )R_k(m)=R_k(m_1,\dots,m_i+1,\dots,m_n )R_i(m).
$$
The general form of corresponding (admissible) plain hypergeometric
 series was determined by Ore and Sato (e.g., see the survey \cite{ggr}).

\begin{theorem}
General admissible plain hypergeometric terms $c(m)$ have the form
$$
c(m)=\frac{R(m)}{\prod_{j=1}^K\Gamma(\epsilon_j(m)+a_j)}\prod_{k=1}^n z_k^{m_k},
$$
where $z_k, a_j$ are arbitrary complex parameters, $K=0,1,2,\dots$,
$\epsilon_j(m)=\sum_{k=1}^n\epsilon_{jk}m_k$, $\epsilon_{jk}\in \Z$,
$R(m)$ is some rational function of $m_1,\dots,m_n$,
and $\Gamma(x)$ is the standard Euler gamma function.
\end{theorem}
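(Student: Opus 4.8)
The plan is to treat the consistency relations as discrete closedness conditions, reconstruct $c(m)$ as a path-ordered product of the certificates, and then read off its shape from the factorization of the $R_i$. First I would note that the consistency conditions, written with $e_k$ for the $k$-th unit shift as $R_i(m+e_k)R_k(m)=R_k(m+e_i)R_i(m)$, are exactly the integrability conditions guaranteeing that the multiplicative ``one-form'' $(R_1,\dots,R_n)$ is a discrete gradient: the product of certificates along any lattice path from $0$ to $m$ is path-independent. Hence $c$ is recovered, up to the constant $c(0)$, as such a product, and choosing the staircase path that first increments $m_1$, then $m_2$, and so on reduces the whole problem to single-variable telescoping of the factors of each $R_i$.

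The single-variable mechanism is the engine of the argument. Using $\Gamma(x+1)=x\,\Gamma(x)$ one has, for an affine factor $s+\beta$, the telescoping identity $\prod_{s=0}^{m_i-1}(s+\beta)=\Gamma(m_i+\beta)/\Gamma(\beta)$, so every affine factor of a certificate turns into a ratio of gamma functions, while a constant factor $z_i$ produces $z_i^{m_i}$. I would then push all gamma functions that land in the numerator down into the denominator via the reflection formula $\Gamma(x)\Gamma(1-x)=\pi/\sin\pi x$: since $\epsilon_j(m)\in\Z$ one has $\sin\pi(\epsilon_j(m)+b)=(-1)^{\epsilon_j(m)}\sin\pi b$, so $\Gamma(\epsilon_j(m)+b)$ becomes $1/\Gamma(1-b-\epsilon_j(m))$ times a sign $\prod_k(\pm1)^{m_k}$, absorbed into the $z_k$, and a constant $\pi/\sin\pi b$, absorbed into the rational prefactor. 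This already proves the theorem for $n=1$ and exhibits the three ingredients: the monomial $\prod_k z_k^{m_k}$, the reciprocal gammas, and a leftover rational function collecting the normalizations $1/\Gamma(\beta)$ and all factors whose arguments coincide modulo $1$.

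Carrying this out along the staircase in the multivariate case, the shift $\beta$ in a factor of $R_i$ depends on the variables $m_1,\dots,m_{i-1}$ already incremented, and the accumulated gamma functions acquire arguments $\epsilon_j(m)+a_j$ with $\epsilon_j(m)=\sum_k\epsilon_{jk}m_k$ a linear form, the mismatched polynomial normalizations collecting into the single rational function $R(m)$. The integrality $\epsilon_{jk}\in\Z$ is then forced directly: since $c(m+e_i)/c(m)=R_i(m)$ must be rational, each factor $\Gamma(\epsilon_j(m)+a_j)$ contributes the ratio $\Gamma(\epsilon_j(m)+a_j)/\Gamma(\epsilon_j(m)+\epsilon_{ji}+a_j)$, and $\Gamma(x)/\Gamma(x+t)$ is rational in $x$ only when $t\in\Z$.

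The \textbf{main obstacle} is the step glossed over above: proving that the irreducible factors of each certificate are necessarily affine forms with integer slopes, rather than higher-degree irreducible polynomials or affine forms with non-integer direction. This is where the consistency conditions must be used in earnest. I would factor $R_i$ over $\C(m_1,\dots,\widehat{m_i},\dots,m_n)[m_i]$ and track how the zero and pole divisors of $c$ must translate rigidly under each lattice shift; the relation $R_i(m+e_k)R_k(m)=R_k(m+e_i)R_i(m)$ forces the irreducible components of these divisors to be translation-invariant families of hyperplanes $\{\epsilon_j(m)+a_j=0\}$ whose common integer direction vector appears consistently in every direction $i$. Matching numerators against denominators across this relation, and discarding the factors that cancel after an integer shift (precisely those swept into $R(m)$), is the technical heart of the proof; once only integer-affine factors are shown to survive, the telescoping assembly above delivers exactly the stated form.
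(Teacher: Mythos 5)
The paper does not actually prove this statement: it is the classical Ore--Sato theorem, quoted with only a pointer to the survey of Gelfand, Graev and Retakh, so there is no internal proof to measure your argument against. Judged on its own terms, your outline follows the standard route --- path-independence of the product of certificates thanks to the compatibility conditions, telescoping of affine factors into ratios of gamma functions via $\Gamma(x+1)=x\Gamma(x)$, and the reflection formula to move gamma factors between numerator and denominator (a manipulation the paper itself mentions right after the theorem) --- and those parts are sound.

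The gap is precisely the step you flag as the ``main obstacle'': showing that every irreducible factor of the numerators and denominators of the compatible certificates $R_i$ that does not cancel into $R(m)$ is an affine form $\sum_k\epsilon_{jk}m_k+a_j$ with $\epsilon_{jk}\in\Z$. This is not a technical detail that can be deferred; it is the entire content of the theorem (everything else is bookkeeping), and your proposed mechanism for it --- ``track how the divisors translate rigidly'' --- is a heuristic, not an argument. To close it you need the shift-chain argument: if $f$ is an irreducible factor of the numerator of $R_i$ written in lowest terms, the compatibility relation $R_i(m+e_k)R_k(m)=R_k(m+e_i)R_i(m)$ forces $f$ to divide products of integer-shifted copies of the numerators and denominators of $R_i$ and $R_k$; because these polynomials have bounded degree, the resulting chain of translates $f(m+ae_i+be_k)$ cannot consist of pairwise coprime polynomials forever, so $f$ must be invariant under some nonzero integer translation $v$, and an irreducible polynomial satisfying $f(m+v)=f(m)$ is constant along $v$ (restrict to a line and use that a periodic polynomial in one variable is constant); iterating over all pairs of indices pins $f$ down to a polynomial in a single integer linear form $\epsilon\cdot m$, and a one-variable factorization then reduces it to affine factors $\epsilon\cdot m+a$ modulo pieces absorbed into $R(m)$. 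Without this, your proposal assumes the conclusion at the crucial point. A smaller imprecision: in the multivariate telescoping the ``normalizations'' $1/\Gamma(\beta)$ carry $\beta=\sum_{k<i}\epsilon_{jk}m_k+a_j$ depending on the earlier variables, so they are neither constants nor rational in $m$ and cannot be swept into $R(m)$; they must be retained as additional gamma factors of integer linear forms (which, fortunately, is exactly the shape the theorem allows).
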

Using the inversion formula $\Gamma(x)\Gamma(1-x)=\pi/\sin\pi x$,
some of the $\Gamma$-functions can be put from the denominator of
$c(m)$ to its numerator.

In a similar way one can treat hypergeometric integrals \cite{wz:inv}.
\begin{defn}
The integrals
$$
\int_{D}\Delta(x)dx=\int_{D}\Delta(x_1,\ldots,x_n)dx_1\dots dx_n,
$$
for some domain of integration $D\in\C^n$, are called plain
hypergeometric integrals, if the ratios
$$
\frac{\Delta(x_1\dots,x_i+1,\dots,x_n)}{\Delta(x_1,\dots,x_n)} =R_i(x_1,\dots,x_n)
$$
are rational functions of $x_1,\dots, x_n$.
\end{defn}

The general admissible plain hypergeometric terms $\Delta(x)$ have the form
$$
\Delta(x)=\varphi(x)R(x)
\frac{\prod_{j=1}^K\Gamma(\sum_{k=1}^n\mu_{jk}x_k+b_j)}
{\prod_{j=1}^M\Gamma(\sum_{k=1}^n\epsilon_{jk}x_k+a_j)}\prod_{k=1}^n z_k^{x_k},
$$
where $z_k, a_j,b_j$ are arbitrary complex parameters, $K,M=0,1,2,\ldots$,
$\mu_{jk},\epsilon_{jk}\in\Z$, $R(x)$ is some
rational function of $x_1,\dots,x_n$,
and $\varphi(x)=\varphi(x_1,\dots,x_i+1,\dots,x_n)$ is an
arbitrary periodic function. The plain hypergeometric series can
be obtained from integrals as sums of residues for particular sequences
of poles of $\Delta(x)$.

The $\Gamma(x)$-function can be defined as a special
meromorphic solution of the functional equation
$$
\Gamma(x+1)=x\Gamma(x),
$$
the general solution of which has the form $\varphi(x)\Gamma(x)$,
where $\varphi(x+1)=\varphi(x)$ is an arbitrary periodic function.

For $n=1$, definition 2 yields the Meijer function for the choices $\varphi=1$,
$R=1$ and $D$ one of the contours
1) \{$-{\textup i}\infty,+{\textup i}\infty\}$ separating
sequences of equidistant poles going to the left and to the right
of the complex plane,
2) $\{-\infty-{\textup i}A,-\infty+{\textup i}B\}$ encircling
sequences of poles going to the left
(for some choice of the positive constants $A$ and $B$),
3) $\{+\infty-{\textup i}A,+\infty+{\textup i}B\}$ encircling
sequences of poles going to the right
(for some choice of the constants $A$ and $B$).

It is worth to remark that a limiting form of the plain hypergeometric
terms is determined by the system of partial differential equations
$$
\frac{1}{\Delta(x)}\frac{\partial\Delta(x)}{\partial x_i}=R_i(x).
$$

\section{$q$-hypergeometric case}

$q$-deformations of hypergeometric functions were introduced by
Heine a long time ago \cite{aar}.

\begin{defn}
The formal series
$$
\sum_{m\in \Z^n} c(m)= \sum_{m\in \Z^n} c(m_1,\dots,m_n)
$$
is called $q$-hypergeometric, if
$$
\frac{c(m_1,\dots,m_i+1,\dots,m_n)}{c(m_1,\dots,m_n)} =R_i(q^{m_1},\dots,q^{m_n})
$$
are rational functions of $q^{m_1},\dots, q^{m_r}$, where $q$ is an arbitrary
complex parameter.
\end{defn}

This is a natural extension of the previous definition and it
leads \cite{ggr} to the following theorem.

\begin{theorem}
General admissible $q$-hypergeometric terms $c(m)$ have the form
$$
c(m)=R(q^m)\frac{\prod_{j=1}^K(a_j;q)_{\mu_j(m)}}
{\prod_{j=1}^M(b_j;q)_{\epsilon_j(m)}}\prod_{k=1}^n x_k^{m_k},
$$
where $x_k, a_j, b_j$ are arbitrary complex parameters, $K,M=0,1,2,\ldots,$
$\mu_j(m)=\sum_{k=1}^n\mu_{jk}m_k$ and
$\epsilon_j(m)=\sum_{k=1}^n\epsilon_{jk}m_k$
with $\mu_{jk},\epsilon_{jk}\in\Z$, $R(q^m)$ is some rational function, and
$$
(x;q)_n:=\begin{cases}
 \prod_{j=0}^{n-1}(1-xq^j), & \text{for $n>0$} \\
\prod_{j=1}^{-n} (1-xq^{-j})^{-1}, & \text{for $n<0$}
\end{cases}
$$
is the $q$-shifted factorial (or the $q$-Pochhammer symbol).
\end{theorem}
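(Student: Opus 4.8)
The plan is to transcribe the problem into multiplicative variables and then run the $q$-analogue of the Ore--Sato argument already used for the plain case. Setting $X_k=q^{m_k}$, the shift $m_i\mapsto m_i+1$ becomes the dilation $T_i\colon X_i\mapsto qX_i$, and each certificate $R_i$ is a rational function of $X=(X_1,\dots,X_n)$. The consistency (admissibility) conditions then read $T_k(R_i)\,R_k=T_i(R_k)\,R_i$, which is precisely the integrability condition guaranteeing that the multiplicative discrete gradient $\{R_i\}$ admits a potential, so a solution $c(m)$ exists and, on the lattice $\Z^n$, is determined up to an overall constant. The real content of the theorem is therefore the \emph{shape} of $c$: the claim that every admissible $\{R_i\}$ is reconstructed from $q$-Pochhammer symbols, monomials, and a rational prefactor.

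First I would factor each $R_i$ over $\C(X_1,\dots,X_n)$ into a constant, a Laurent monomial $\prod_k X_k^{d_{ik}}$, and a product of integer powers of irreducible polynomials $P_\alpha(X)$ that are not themselves monomials. Substituting this factorization into the consistency relation and comparing irreducible factors on the two sides, the non-monomial factors must match as multisets after the dilations $T_i$ are applied; this organizes the $P_\alpha$ into $T$-orbits and forces the monomial exponents to be symmetric, $d_{ik}=d_{ki}$.

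The key lemma — and the step I expect to be the main obstacle — is the classification of which irreducible factors can occur. I would show that compatibility with the dilation action forces every admissible $P_\alpha$ to depend on the variables only through a single monomial $Y=\prod_k X_k^{\mu_k}$ and, as an irreducible polynomial in $Y$ reproduced by $Y\mapsto q^{|\mu|}Y$, to be linear, hence of the form $1-a\prod_k X_k^{\mu_k}$ up to a unit. This is the exact $q$-counterpart of the fact that in the plain case the only admissible building blocks are linear forms $\epsilon(m)+a$, which integrate to $\Gamma$-functions; here a binomial $1-aq^{\mu(m)}$ integrates, via $(a;q)_{r+1}=(1-aq^{r})(a;q)_r$, to a $q$-Pochhammer symbol $(a;q)_{\mu(m)}$ with $\mu(m)=\sum_k\mu_k m_k$.

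Finally I would assemble the pieces: each binomial orbit contributes a factor $(a_j;q)_{\mu_j(m)}$, placed in the numerator or denominator according to the sign of its exponent; the monomial parts and the constants are reproduced by $\prod_k x_k^{m_k}$; and any residual factor that is already rational in $q^m$, together with the normalizing constant, is absorbed into the prefactor $R(q^m)$ — the split between a genuine $q$-Pochhammer symbol and such a rational piece being dictated by whether the dilation orbit of the corresponding binomial is infinite or closes up into a finite product. One caveat worth flagging is the monomial/Gaussian degree: a nonzero symmetric $d_{ik}$ integrates to a quadratic-exponent factor $q^{Q(m)}$ that is not of the stated form, so the clean statement tacitly assumes that the certificates carry no uncancelled monomial growth beyond what $\prod_k x_k^{m_k}$ accounts for, i.e. that $R(q^m)$ genuinely remains rational in $q^m$.
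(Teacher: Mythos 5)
The paper offers no proof of this theorem to compare yours against: it is quoted from the Gelfand--Graev--Retakh survey \cite{ggr} with only the remark that it is a ``natural extension'' of the plain Ore--Sato theorem. Judged on its own, your outline follows the standard Ore--Sato strategy transported to multiplicative variables, and the architecture (pass to $X_k=q^{m_k}$, factor the certificates into constants, monomials and irreducible polynomials, organize the latter into dilation orbits) is the right one. But essentially the whole content of the theorem for $n\ge 2$ sits in the step you label ``the key lemma'' and then skip: that any irreducible factor of a certificate which cannot be absorbed into a coboundary $u(\dots,qX_i,\dots)/u(X)$ with $u$ rational must be a binomial $1-a\prod_k X_k^{\mu_k}$ in a single Laurent monomial. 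For $n=1$ this is just the fundamental theorem of algebra; for $n\ge 2$ an irreducible polynomial in $X_1,\dots,X_n$ has no reason to have this shape, and excluding the others requires the genuine combinatorial argument of Ore--Sato type --- comparing the multisets of irreducible factors on the two sides of the compatibility relation and showing that a factor whose orbit under the dilations $T_i$ is too large would force infinitely many distinct irreducible factors into a single rational function. As written you assert the conclusion of that argument (``I would show\ldots'') rather than give it, so the proposal is a correct plan, not a proof.

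A smaller point: your closing caveat about the Gaussian factor is misplaced. A symmetric integer matrix $d_{ik}$ of monomial exponents integrates to $q^{Q(m)}$ with $Q(m)=\sum_{i<j}d_{ij}m_im_j+\sum_i d_{ii}m_i(m_i-1)/2$, and such factors \emph{are} of the stated form: the identity $(a;q)_N\,(qa^{-1};q)_{-N}=(-a)^N q^{N(N-1)/2}$ exhibits $q^{N(N-1)/2}$ as a ratio of $q$-Pochhammer symbols with constant bases times a geometric factor, and applying it with $N=m_i$, $N=m_j$ and $N=m_i+m_j$ produces $q^{m_im_j}$ as well, all with linear integer arguments $\mu(m)$. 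So the theorem does not tacitly assume the absence of quadratic growth; it absorbs it. The cleaner route is to carry the factor $q^{Q(m)}$ explicitly through your assembly step as an intermediate normal form and then remove it by this identity.
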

\begin{defn}

The integrals
$$
\int_{D}\Delta(x)dx=\int_{D}\Delta(x_1,\ldots,x_n)dx_1\dots dx_n,
$$
for some domain of integration $D\in\C^n$, are called
$q$-hypergeometric, if the ratios
$$
\frac{\Delta(x_1,\dots,x_i+1,\dots,x_n)}{\Delta(x_1,\dots,x_n)}
=R_i(q^{x_1},\dots,q^{x_n})
$$
are rational functions of $q^{x_1},\dots, q^{x_n}$.
\end{defn}

Define $q$-gamma functions as special meromorphic
solutions of the finite-difference equation
\be
f(u+\omega_1)=(1-e^{2\pi {\textup i} u/\omega_2})f(u),
\lab{q-eqn}\ee
where $\omega_1,\omega_2\in\C$. Evidently, solutions of this equation
are defined modulo multiplication by an arbitrary $\varphi(u+\omega_1)=\varphi(u)$
periodic function. Introducing the variables
$$
q:=e^{2\pi {\textup i}\omega_1/\omega_2}, \qquad    z:=e^{2\pi {\textup i}u/\omega_2},
$$
this equation can be replaced by
$$
\Gamma_q(qz)=(1-z)\Gamma_q(z).
$$
For $|q|<1$ its particular solution,
analytic near the point $z=0$, is determined by a simple iteration,
which yields
$$
\Gamma_q(z)=\frac{1}{(z;q)_\infty}=\prod_{j=0}^\infty \frac{1}{1-zq^j},
 \quad \Gamma_q(0)=1.
$$
This function can be considered as a $q$-gamma function for $|q|<1$.
More precisely, the  Thomae-Jackson $q$-gamma function has the form
$$
\Gamma(u;q)=(1-q)^{1-u}\frac{(q;q)_\infty}{(q^u;q)_\infty},\qquad
\lim_{q\to1} \Gamma(u;q)=\Gamma(u).
$$
It satisfies the equations
$$
\Gamma(u+1;q)=\frac {1-q^u}{1-q}\Gamma(u;q),\qquad
\Gamma(u-\frac{2\pi {\textup i}}{\log q};q )=(1-q)^{\frac{2\pi {\textup i}}{\log q }} \Gamma(u;q).
$$
For $|q|<1$ the $q$-Pochhammer symbol can be written as
$$
(t;q)_n= \frac{(t;q)_\infty}{(tq^n;q)_\infty},\qquad n\in\Z.
$$

For $|q|=1$ the equation $f(qz)=(1-z)f(z)$ does not have meromorphic
solutions for $z\in\C^*$. In this case it is necessary to consider
equation (\ref{q-eqn}) and search for its solutions meromorphic in $u\in\C$.
The modified $q$-gamma function
\be
\gamma(u;\omega_1,\omega_2)=
\exp\left(-\int_{\R+{\textup i}0}\frac{e^{ux}}
{(1-e^{\omega_1 x})(1-e^{\omega_2 x})}\frac{dx}{x}\right),
\label{mod-q-gamma}\ee
where the contour $\R+{\textup i}0$ passes along the real axis turning
over the point $x=0$ from above in an infinitesimal way,
solves  (\ref{q-eqn}) and remains meromorphic for $\omega_1,\omega_2>0$
when $|q|=1$. (\ref{mod-q-gamma}) is known as the Barnes-Shintani
``double sign" function, the noncompact quantum dilogarithm,
or the hyperbolic gamma function
(see the survey \cite{spi:sur} for relevant references).

We assume that $\text{Re}(\omega_1), \text{Re}(\omega_2)>0$. Then the integral
\eqref{mod-q-gamma} is convergent for
$0<\text{Re}(u)< \text{Re}(\omega_1+\omega_2)$.
Under appropriate restrictions  on $u$ and $\omega_{1,2}$,
it can be computed as a convergent sum
of residues of the poles in the upper half-plane. For
$\text{Im}(\omega_1/\omega_2)>0$ (i.e., for $|q|<1$) this leads to the expression
\begin{equation}
\gamma(u;\omega_1,\omega_2)=\frac{(e^{2\pi {\textup i}u/\omega_1}\tilde q;\tilde q)_\infty}
{(e^{2\pi {\textup i} u/\omega_2}; q)_\infty},
\label{mod-gam-prod}\end{equation}
which is continued analytically to the whole complex plane of $u$.
Here
$$
\tilde q:=e^{-2\pi {\textup i}\omega_2/\omega_1}
$$
is a particular modular transform of $q$.

The function $\gamma(u;\omega_1,\omega_2)$ is symmetric in $\omega_1,\omega_2$.
Therefore it satisfies the second equation
\be
f(u+\omega_2)=(1-e^{2\pi {\textup i} u/\omega_1})f(u).
\lab{q-eqn2}\ee
Suppose $\omega_1,\omega_2$ are real and linearly independent over $\Z$,
i.e. $|q|=1$, but $q^n \neq 1$.
Then, equations (\ref{q-eqn}) and  (\ref{q-eqn2}) taken together
define their solution $\gamma(u;\omega_1,\omega_2)$
uniquely up to multiplication by a constant. For $|q|<1$, there
is a functional freedom in multiplication of $\gamma(u;\omega_1,\omega_2)$
by an arbitrary elliptic function
$$
\varphi(u+\omega_1)=\varphi(u+\omega_2)=\varphi(u).
$$

For $|q|<1$, the general admissible $q$-hypergeometric
term $\Delta(x)$ (integral kernel) has the form
$$
\Delta(x)=\varphi(x)R(q^x)
\frac
{\prod_{j=1}^M(w_jq^{\sum_{k=1}^n\epsilon_{jk}x_k};q)_\infty}
{\prod_{j=1}^K(t_jq^{\sum_{k=1}^n\mu_{jk}x_k};q)_\infty}\prod_{k=1}^n z_k^{x_k},
$$
where $z_k, t_j,w_j$ are arbitrary complex parameters, $K,M=0,1,2,\ldots$,
$\mu_{jk},\epsilon_{jk}\in\Z$, $R(q^x)$ is an arbitrary
rational function of $q^{x_1},\dots,q^{x_n}$,
and $\varphi(x)=\varphi(x_1,\dots,x_i+1,\dots,x_n)$ is
an arbitrary periodic function.

If we drop the factor $\varphi(x)$, then the function
$\rho(q^{x}):=\Delta(x)$ satisfies the system
of $q$-difference equations
$$
 \rho(\dots qy_i\dots) =R_i(y)\rho(y_1,\dots,y_n),
$$
where $R_i(y)$ are rational functions such that
$$
R_i(y_1,\dots, qy_k,\dots, y_n )R_k(y)
=R_k(y_1,\dots, qy_i,\dots, y_n )R_i(y).
$$
Replacement of linear differences by $q$-shifts essentially simplifies formulae.

To construct $\Delta(x)$ for $|q|=1$ one needs the modified $q$-gamma function
$\gamma(x;\omega_1,\omega_2)$. It is natural to require symmetry of
$\Delta(x)$ in $\omega_{1}$ and $\omega_{2}$, i.e., to require
fulfillment of the system of equations
$$
\Delta(\dots,x_i+\omega_k,\dots )
=R_i(e^{2\pi {\textup i}x_1/\omega_{k+1}},\dots,e^{2\pi {\textup i}x_n/\omega_{k+1}};
\omega_k,\omega_{k+1}) \Delta(x),
$$
where $i=1,\ldots, n$ and $k=1,2$, $\ \omega_{k+2}=\omega_k$.
This yields uniquely up to multiplication by a constant
$$
\Delta(x)= \exp{\Bigl(\frac{\pi {\textup i}}{\omega_1\omega_2}
 \sum_{j,k=1}^n\mu_{jk}x_jx_k +\sum_{j=1}^nc_jx_j\Bigr)}\
\prod_{j=1}^M\gamma\Bigl(\sum_{k=1}^n\epsilon_{jk}x_k+a_j;
\omega_1,\omega_2\Bigr),
$$
where $\epsilon_{jk}, \mu_{jk}\in\Z$ and $a_j, c_j\in\C$.

\section{Elliptic case}

The key analytic object of the theory of elliptic functions is the
theta series having a convenient multiplicative form due to the
Jacobi triple product identity
$$
\sum_{n\in \Z} p^{n(n-1)/2}(-x)^n=(p;p)_\infty \theta_p(x),
$$
where
$$
\theta_p(x)=(x;p)_\infty (px^{-1};p)_\infty, \quad
(x;p)_\infty=\prod_{j=0}^\infty (1-xp^j)
$$
for any $x\in\C^*$ and  $p\in\C, |p|<1$.
This function obeys the symmetry properties
$$
\theta_p(x^{-1})=\theta_p(px)=-x^{-1}\theta_p(x)
$$
and the addition law
$$
\theta_p(xw^{\pm 1},yz^{\pm 1}) -\theta_p(xz^{\pm 1},yw^{\pm 1})
=yw^{-1}\theta_p(xy^{\pm 1},wz^{\pm 1}),
$$
where $x,y,w,z\in\C^*$.
We use the conventions
$$
\theta_p(x_1,\ldots,x_k)=\prod_{j=1}^k\theta_p(x_j), \qquad
\theta_p(tx^{\pm1})=\theta_p(tx,tx^{-1}).
$$

For arbitrary $q\in \C$ and $n\in\Z$,  the
elliptic shifted factorials are defined as
$$
\theta_p(x;q)_n:=\begin{cases}
 \prod_{j=0}^{n-1}\theta_p(xq^j), & \text{for $n>0$} \\
\prod_{j=1}^{-n} \theta_p(xq^{-j})^{-1}, & \text{for $n<0$}
\end{cases}
$$
and $\theta_p(x;q)_0=1$.
For $p=0$ we have $\theta_0(x)=1-x$ and
$\theta_0(x;q)_n = (x;q)_n$.

For arbitrary $m\in\Z$, we have the quasiperiodicity relations
$$
\theta_p(p^mx)=(-x)^{-m}p^{-\frac{m(m-1)}{2}}\theta_p(x), \quad
$$
$$
\theta_p(p^mx;q)_k=(-x)^{-mk}q^{-\frac{mk(k-1)}{2}}
p^{-\frac{km(m-1)}{2}}\theta_p(x;q)_k, \quad
$$
$$
\theta_p(x;pq)_k=(-x)^{-\frac{k(k-1)}{2}}q^{-\frac{k(k-1)(2k-1)}{6}}
p^{-\frac{k(k-1)(k-2)}{6}}\theta_p(x;q)_k.
$$

Elliptic gamma functions are defined as special meromorphic solutions
of the finite difference equation
\be
f(u+\omega_1)=\theta_p(e^{2\pi {\textup i} u/\omega_2})f(u),
\lab{e-gamma-eq}\ee
which passes to \eqref{q-eqn} for $p\to 0$ and fixed $u$.
It is not difficult to see that the double infinite product
\begin{equation}
 \Gamma_{\! p,q}(z)
=\prod_{j,k=0}^\infty\frac{1-z^{-1}p^{j+1}q^{k+1}}{1-zp^{j}q^{k}},
\label{ell-gamma}\end{equation}
where $|p|,|q|<1$ and $z\in\C^*$, satisfies the equations
\begin{equation}
\Gamma_{\! p,q}(qz)=\theta_p(z)\Gamma_{\! p,q}(z),\qquad
\Gamma_{\! p,q}(pz)=\theta_q(z)\Gamma_{\! p,q}(z).
\label{e-gam-eq}\end{equation}
The second relation follows from the first one due to the symmetry
in $q$ and $p$. Thus, the function
\begin{equation}
f(u)=\Gamma_{\! p,q}(e^{2\pi {\textup i} u/\omega_2}), \quad q=e^{2\pi {\textup i}\omega_1/\omega_2},
\label{ell-gamma-add}\end{equation}
defines a solution of  equation \eqref{e-gamma-eq} for $|q|<1$.
For fixed $z$, $\Gamma_{\! 0,q}(z)=1/(z;q)_\infty$.

The standard elliptic gamma function (\ref{ell-gamma-add}) is directly related to
the Barnes multiple gamma function of the third order \cite{bar:multiple}.
Its special cases and different properties were investigated by Jackson,
Baxter, Ruijsenaars, Felder, Varchenko, Rains,
and the present author (see survey \cite{spi:sur}).

Suppose three complex variables $\omega_{1,2,3}$ are linearly independent over $\Z$.
Then the well known Jacobi theorem states that if a meromorphic $\varphi(u)$
satisfies the system of equations
$$
\varphi(u+\omega_1)=\varphi(u+\omega_2)=\varphi(u+\omega_3)=\varphi(u),
$$
then $\varphi(u)=const$. Define the bases
$$
q= e^{2\pi {\textup i}\frac{\omega_1}{\omega_2}}, \quad
p=e^{2\pi {\textup i}\frac{\omega_3}{\omega_2}}, \quad  r=e^{2\pi {\textup i}\frac{\omega_3}{\omega_1}}
$$
and their particular modular transformations
$$
\tilde q= e^{-2\pi {\textup i}\frac{\omega_2}{\omega_1}}, \quad
\tilde p=e^{-2\pi {\textup i}\frac{\omega_2}{\omega_3}},   \quad
\tilde r=e^{-2\pi {\textup i}\frac{\omega_1}{\omega_3}}.
$$
The incommensurability condition for $\omega_k$ takes now the
form $p^n\neq q^m$, or $r^n\neq {\tilde q}^m$, etc., $n,m\in\Z$.

The elliptic gamma function (\ref{ell-gamma-add}) can be defined uniquely
as the meromorphic solution of the system of three equations:
\begin{eqnarray*}
f(u+\omega_1)=\theta_p(e^{2\pi {\textup i} u/\omega_2})f(u),
\\
f(u+\omega_2)=f(u),
\\
f(u+\omega_3)=\theta_q(e^{2\pi {\textup i} u/\omega_2})f(u)
\end{eqnarray*}
with the normalization condition $f(\sum_{k=1}^3\omega_k/2)=1$.

The modified elliptic gamma function has the form \cite{spi:theta2}
\be
G(u;\mathbf{\omega})=
\Gamma_{\! p,q}(e^{2\pi {\textup i} \frac{u}{\omega_2}})
\Gamma_{\! \tilde q,r}(re^{-2\pi {\textup i} \frac{u}{\omega_1}}).
\lab{unit-e-gamma}\ee
It defines the unique solution of three equations:
$$
f(u+\omega_2) =\theta_r(e^{2\pi {\textup i} u/\omega_1}) f(u),
\qquad   f(u+\omega_3) =e^{-\pi {\textup i}B_{2,2}(u|\omega_1, \omega_2)} f(u)
$$
and equation \eqref{e-gamma-eq} with the normalization
$f(\sum_{k=1}^3\omega_k/2)=1$. Here
\begin{equation}
B_{2,2}(u|\m_1,\m_2)=\frac{1}{\m_1\m_2}\left(u^2-(\m_1+\m_2)u+\frac{\m_1^2+\m_2^2}{6}+
\frac{\m_1\m_2}{2}\right)
\label{ber2}\end{equation}
is the second Bernoulli polynomial. The function
\be
G(u;\mathbf{\omega})
= e^{-\frac{\pi {\textup i}}{3}B_{3,3}(u|\mathbf{\omega})}
\Gamma_{\! \tilde r,\tilde p}(e^{-2\pi {\textup i} \frac{u}{\omega_3}}),
\lab{mod-e-gamma}\ee
where  $|\tilde p|,|\tilde r|<1$,
\begin{eqnarray*}
&& B_{3,3}(u|\m_1,\m_2,\m_3)=\frac{1}{\m_1\m_2\m_3}\Biggl(u^3-\frac{3u^2}{2}\sum_{k=1}^3\m_k
\\ && \makebox[9em]{}
+\frac{u}{2}\left(\sum_{k=1}^3\m_k^2+3\sum_{j<k}\m_j\m_k\right)
-\frac{1}{4}\left(\sum_{k=1}^3\m_k\right)\sum_{j<k}\m_j\m_k\Biggr)
\end{eqnarray*}
is the third Bernoulli polynomial,
satisfies the same three equations and the normalization
as function \eqref{unit-e-gamma}. Hence they  coincide
and their equality reflects one of the $SL(3;\Z)$
modular group transformation laws \cite{fel-var:elliptic}.
In general it is expected that the elliptic hypergeometric
integrals to be described below represent some
automorphic forms in the cohomology class of $SL(3;\Z)$.

Evidently, one deals in this picture with three elliptic curves
with the modular parameters
$$
\tau_1=\frac{\omega_1}{\omega_2}, \qquad
\tau_2=\frac{\omega_3}{\omega_2}, \qquad
\tau_3=\frac{\omega_3}{\omega_1}, \qquad
$$
satisfying the constraint $\tau_3=\tau_2/\tau_1$.

From expression \eqref{mod-e-gamma} it follows that
$G(u;\mathbf{\omega})$ is a meromorphic function of $u$ for
$\omega_1/\omega_2>0$, when $|q|=1$. Therefore, not surprisingly,
for Im$(\omega_3/\omega_1)$, Im$(\omega_3/\omega_2)\to +\infty$
one has
$$
\lim_{p,r\to0} G(u;\mathbf{\omega}) = \gamma(u;\omega_1,\omega_2).
$$
For $|q|>1$ a solution of (\ref{e-gamma-eq}) is given by
$\Gamma_{\! p,q^{-1}}(q^{-1}e^{2\pi {\textup i} u/\omega_2})^{-1}$.

We use the conventions
\begin{eqnarray*}
&& \Gamma_{\! p,q}(t_1,\ldots,t_k):=\Gamma_{\! p,q}(t_1)\cdots\Gamma_{\! p,q}(t_k),\quad
\\ &&
\Gamma_{\! p,q}(tz^{\pm1}):=\Gamma_{\! p,q}(tz)\Gamma_{\! p,q}(tz^{-1}),\quad
\Gamma_{\! p,q}(z^{\pm2}):=\Gamma_{\! p,q}(z^2)\Gamma_{\! p,q}(z^{-2}).
\end{eqnarray*}
Some useful  properties of $ \Gamma_{\! p,q}(z)$ are:
$$
\theta_p(x;q)_n=\frac{\Gamma_{\! p,q}(xq^n)}{\Gamma_{\! p,q}(x)},
$$
the reflection equation $ \Gamma_{\! p,q}(z)\Gamma_{\! p,q}(pq/z)=1,$
the duplication formula
$$
\Gamma_{\! p,q}(z^2)=\Gamma_{\! p,q}(z,-z,q^{1/2}z,-q^{1/2}z, p^{1/2}z,-p^{1/2}z,
(pq)^{1/2}z,-(pq)^{1/2}z),
$$
and the limiting relation
$$
\lim_{z\to1}(1-z)\Gamma_{\! p,q}(z)=\frac{1}{(p;p)_\infty(q;q)_\infty}.
$$

We skip consideration of elliptic hypergeometric series which are
constructed similarly to the elliptic hypergeometric integrals
which we explain now. Also, we stick to the multiplicative notation,
skipping analysis of the additive finite difference equations for the
integral kernels. Let us define the $n$-dimensional integrals
\[
I(y_1,\ldots, y_m)=\int_{x\in D} \Delta(x_1,\dots,x_n;y_1,\ldots,y_m)
\prod_{j=1}^n\frac{dx_j}{x_j},
\]
where $D\subset \C^n$ is some domain of integration
and $\Delta(x_1,\dots,x_n;y_1,\ldots,
y_m)$ is a meromorphic function
of $x_j, y_k$, where $y_k$ denote the ``external" parameters.
The following definition was introduced in \cite{spi:theta2}.

\begin{defn}
The integral $I(y_1,\ldots, y_m;p,q)$ is called the elliptic hypergeometric
integral if there
are two distinguished complex parameters $p$ and $q$ such that
the kernel $\Delta(x_1,\dots, x_n;y_1,\ldots,y_m;p,q)$
(the elliptic hypergeometric term) satisfies the following
system of linear first order $q$-difference equations in the integration
variables $x_j$:
$$
\frac{\Delta(\ldots qx_j\ldots;y_1,\ldots,y_m;p,q)}
{\Delta(x_1,\dots,x_n;y_1,\ldots,y_m;p,q)}
= h_j(x_1,\dots,x_n;y_1,\ldots,y_m;q;p),
$$
where the $q$-certificates $h_j$, $j=1,\dots, n$, are some $p$-elliptic
functions of the variables $x_k$.
\end{defn}

Let us describe the explicit form of the univariate, $n=1$,
elliptic hypergeometric term $\Delta$. For this we recall that
any meromorphic $p$-elliptic function $f(px)=f(x)$ can be represented
as a ratio of theta functions
$$
f_p(x)=z\prod_{k=1}^N\frac{\theta_p(t_kx)}{\theta_p(w_kx)},\qquad
\prod_{k=1}^Nt_k=\prod_{k=1}^Nw_k,
$$
where $z,t_1,\dots,t_N,w_1,\dots,w_N\in \C^*$ are arbitrary variables
parametrizing the function's divisor. The integer $N=0, 2,3,\dots$ is
called the order of the elliptic function, and, in association with
the hypergeometric functions \cite{spi:sur}, the constraint on products of
the parameters is called the balancing condition.
Since $z=\theta_p(zx,px)/\theta_p(pzx,x)$, the parameter $z$ can be obtained
from the ratios of theta functions by a special choice of
the parameters $t_k$ and $w_k$
without violation of the balancing condition. We can thus set $z=1$.

The equation $\Delta(qx)=f_p(x)\Delta(x)$ determining the elliptic hypergeometric
terms has the general solution for $|q|<1$
$$
\Delta(x)= \varphi(x)\prod_{k=1}^N\frac{\Gamma_{\! p,q}(t_kx)}{\Gamma_{\! p,q}(w_kx)},
\qquad \varphi(x)=\prod_{k=1}^M\frac{\theta_q(a_kx)}{\theta_q(b_kx)},\quad
\prod_{k=1}^Ma_k=\prod_{k=1}^Mb_k,
$$
where $\varphi(qx)=\varphi(x)$ is an arbitrary $q$-elliptic function of order $M$.
Since
$$
\varphi(x)=\prod_{k=1}^M\frac{\Gamma_{\! p,q}(pa_kx,b_kx)}{\Gamma_{\! p,q}(a_kx,pb_kx)},
$$
we see that $\varphi(x)$ can be obtained from the $\Gamma_{\! p,q}$-factors
as a result of a special choice of the parameters $t_k$ and $w_k$ (such
a reduction preserves the balancing condition). So, we can drop $\varphi(x)$
and get the general univariate elliptic hypergeometric term
$$
\Delta(x;t_1,\dots,t_N,w_1,\dots,w_N;p,q) =
\prod_{k=1}^N\frac{\Gamma_{\! p,q}(t_kx)}{\Gamma_{\! p,q}(w_kx)},
\quad \prod_{k=1}^N\frac{t_k}{w_k}=1.
$$
Note that this function is symmetric in $p$ and $q$.
For incommensurate $p$ and $q$, the pair of equations
$$
\Delta(qx)=f_p(x)\Delta(x),\qquad
\Delta(px)=f_q(x)\Delta(x)
$$
determines the kernel $\Delta(x)$ up to a multiplicative constant (which may,
of course, depend on the parameters $t_k$ and $w_k$).

It is easy to see that for $|q|>1$, we have
$$
\Delta(x;t_1,\dots,t_N,w_1,\dots,w_N;p,q) =
\prod_{k=1}^N\frac{\Gamma_{\! p,q^{-1}}(q^{-1}w_kx)}
{\Gamma_{\! p,q^{-1}}(q^{-1}t_kx)}
$$
with the same balancing condition.
The region $|q|=1$ is considered as in the previous section -- it is
necessary to pass to the additive form of the equations for integral kernels
and determine corresponding elliptic hypergeometric terms using the
$G(u;\mathbf{\omega})$-function, which is skipped here.

Although there are some ideas and reasonable arguments how the general
elliptic hypergeometric terms should look like in the multivariable
setting, we prefer to state it as an open problem --- the
formulation of an elliptic (or, more generally, a theta-hypergeometric
\cite{spi:theta2}) analogue of the Ore-Sato theorem.

\section{Totally elliptic hypergeometric terms}

The following definition was introduced by the author in 2001.
\begin{defn}
A meromorphic function $f(x_1,\dots,\, x_n;p)$
of $n+1$ indeterminates $x_j\in\C^*$ and $p\in\C$
with $|p|<1$ or $|p|>1$ is called totally $p$-elliptic if
$$
f(px_1,\ldots,x_n;p)=\ldots=f(x_1,\ldots,px_n;p)=f(x_1,\ldots,x_n;p),
$$
and if its divisor forms a non-trivial
manifold of maximal possible dimension.
\end{defn}

The neat point of this definition consists in the demand of absence of
constraints on the divisor of the elliptic functions except of those
following from the $p$-ellipticity (i.e., positions of zeros and poles
of $f$ should not be prefixed). For instance, the Weierstra\ss{}
function ${\mathcal P}(u)$ with the periods
1 and $\tau$ is not totally elliptic
since the positions of its poles and zeros are fixed. Its linear fractional
transform can be written as a $p$-elliptic function
$f(z)=f(pz)=\theta_p(az,bz)/\theta_p(cz,dz)$,
where $z=e^{2\pi {\textup i} u}$, $p=e^{2\pi {\textup i}\tau}$, $ab=cd$,
but it is not $p$-elliptic for indeterminates $a,b,c$ (if $d$ is counted as a
dependent variable). E.g., the function
$$
f(x,y,w,z;p)=
\frac{\theta_p(xw^{\pm 1},yz^{\pm 1})}{\theta_p(xz^{\pm 1},yw^{\pm 1})}
$$
is totally $p$-elliptic. It was conjectured also that any totally
elliptic function is automatically modular invariant.

Using the notion of total ellipticity and the results
of \cite{spi:theta2,spi:short} it is natural to enrich
the structure of elliptic hypergeometric integrals by
the following definition.

\begin{defn}
An elliptic hypergeometric integral
\[
I(y_1,\ldots, y_m,;p,q)=\int_{x\in D} \Delta(x_1,\dots,x_n;y_1,\ldots,y_m;p,q)
\prod_{j=1}^n\frac{dx_j}{x_j}
\]
is called totally elliptic if all $q$-certificates
$h_j(x_1,\dots,x_n;y_1,\ldots,y_m;q;p)$, including the external
$y_k$-variables certificates
$$
h_{n+k}(x;y;q;p)
=\frac{\Delta(x;\ldots qy_k\ldots;p,q)}
{\Delta(x_1,\dots,x_n;y_1,\ldots,y_m;p,q)}, \quad k=1,\dots, m,
$$
are totally elliptic functions (i.e., they are $p$-elliptic in $x_j, y_k,$ and $q$).
\end{defn}

Some of the properties of such integrals are described in the following theorem.

\begin{theorem}[Rains, Spiridonov, 2004]
Define the meromorphic function
\begin{equation}
\Delta(x_1,\dots,x_n;p,q)
=
\prod_{a=1}^K
\Gamma_{\! p,q}(x_1^{m_1^{(a)}}x_2^{m_2^{(a)}}
\dots x_n^{m_n^{(a)}})^{\epsilon(m^{(a)})},
\label{rs-term}\end{equation}
where $m_j^{(a)}\in\Z$, $j=1,\dots,n,\ a=1,\dots, K$, and
$\epsilon(m^{(a)})=\epsilon(m^{(a)}_1,\dots,m^{(a)}_n)$
are arbitrary  $\Z^n\to \Z$ maps with finite support.
Suppose $\Delta$ is a totally elliptic hypergeometric term, i.e. all its
$q$-certificates are $p$-elliptic functions of $q$ and $x_1,\ldots,x_n$. Then these
certificates are also modular invariant.
\end{theorem}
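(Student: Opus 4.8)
The plan is to prove the claim in three stages: first make the certificates explicit, then read off from total $p$-ellipticity a list of algebraic constraints on the exponents $\{m^{(a)},\epsilon(m^{(a)})\}$, and finally show that the modular anomaly of the certificates is built out of exactly those same constrained quantities and therefore vanishes.

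First I would compute the $q$-certificates. Applying the functional equation $\eg(qz)=\theta_p(z)\eg(z)$ repeatedly to each factor of \eqref{rs-term}, and using $\theta_p(x;q)_k=\eg(xq^k)/\eg(x)$, the shift $x_j\mapsto qx_j$ sends the monomial argument $z_a=\prod_{k}x_k^{m_k^{(a)}}$ to $q^{m_j^{(a)}}z_a$, so that
$$
h_j=\frac{\Delta(\ldots,qx_j,\ldots)}{\Delta(x_1,\dots,x_n)}=\prod_{a=1}^K\theta_p\bigl(z_a;q\bigr)_{m_j^{(a)}}^{\,\epsilon(m^{(a)})},\qquad z_a=\prod_{k=1}^n x_k^{m_k^{(a)}}.
$$
Thus every certificate is a finite product of ordinary theta functions $\theta_p$ raised to integer powers; it depends on $x_1,\dots,x_n$ and on $q$, but no longer on the gamma function itself.

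Second, I would translate total $p$-ellipticity into constraints on the data. Writing $x_k=e^{2\pi{\textup i}\xi_k}$, $q=e^{2\pi{\textup i}\kappa}$, $p=e^{2\pi{\textup i}\sigma}$, each factor of $h_j$ becomes a Jacobi theta function with additive argument $v_{a,s}=s\kappa+\sum_k m_k^{(a)}\xi_k$, $s=0,\dots,m_j^{(a)}-1$. Invoking the quasiperiodicity relations for $\theta_p(\cdot;q)_k$ recorded above, invariance of $h_j$ under $\xi_k\mapsto\xi_k+\sigma$ (that is, $x_k\mapsto px_k$) and under $\kappa\mapsto\kappa+\sigma$ (that is, $q\mapsto pq$) forces the vanishing of the symmetric combinations
$$
T_{jkl}=\sum_a \epsilon(m^{(a)})\,m_j^{(a)}m_k^{(a)}m_l^{(a)},\qquad Q_{jk}=\sum_a \epsilon(m^{(a)})\,m_j^{(a)}m_k^{(a)},\qquad L_j=\sum_a \epsilon(m^{(a)})\,m_j^{(a)}
$$
for all indices. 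The cubic tensor $T$ and the quadratic tensor $Q$ arise from the $x_k$-multipliers, while the linear relations $L_j=0$ are precisely what the extra $q$-ellipticity supplies: once $T=Q=0$, the coefficient of $\kappa$ in the $q$-shift multiplier reduces to a multiple of $L_j$.

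Third, I would compute the modular anomaly. The transformation $\sigma\mapsto-1/\sigma$, induced on the level of $\eg$ by the law \eqref{mod-e-gamma} with its $B_{3,3}$ factor, acts on each $\theta_p$ by a Gaussian factor $e^{\pi{\textup i} v^2/\sigma}$ and a constant prefactor, and carries $p$-elliptic functions to elliptic functions for the transformed nome. Hence the only obstruction to modular invariance of the genuinely elliptic $h_j$ is the total Gaussian exponent $\sum_a\epsilon(m^{(a)})\sum_s v_{a,s}^2$. Expanding $v_{a,s}^2=(s\kappa+\sum_k m_k^{(a)}\xi_k)^2$ and performing $\sum_s 1,\sum_s s,\sum_s s^2$, the coefficients of $\xi_l\xi_{l'}$, of $\kappa\xi_l$, and of $\kappa^2$ are, respectively, $T_{jll'}$, a combination of $T_{jjl}$ and $Q_{jl}$, and a combination of $T_{jjj}$, $Q_{jj}$, $L_j$; all vanish by the second step. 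The prefactor is likewise trivial, its exponent being a multiple of $L_j=0$. Therefore $h_j$ is modular invariant, and the identical argument applies to the external certificates $h_{n+k}$, since these too are theta products once one writes the $\eg$-factors in \eqref{rs-term} under $y_k\mapsto qy_k$.

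The main obstacle is the matching carried out in the third stage. A priori the ellipticity conditions come from the quasiperiodicity multipliers of the theta functions, whereas the modular anomaly comes from the Gaussian factor, and there is no formal reason these should be controlled by the same data; the content of the theorem is exactly that both are governed by the single family of symmetric tensors $T,Q,L$. Making this precise is a bookkeeping problem: one must track the discrete sums over $s$ and the signs and $\sigma$-dependent prefactors in the theta transformations, and verify that after these are expanded the two quadratic forms coincide. Once the coefficients are lined up the vanishing is automatic, but getting the two computations into comparable form is where the real work lies.
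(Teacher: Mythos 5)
Your proposal is correct and follows essentially the same route as the paper: compute the certificates as products of elliptic shifted factorials $\theta_p(x^{m^{(a)}};q)_{m_i^{(a)}}^{\epsilon(m^{(a)})}$, extract the cubic, quadratic and linear Diophantine constraints \eqref{con1}--\eqref{con3} from $p$-ellipticity in the $x_j$ and in $q$, and then observe that the modular anomaly --- which the paper writes as a sum of second Bernoulli polynomials $B_{2,2}$ and you write equivalently as the Gaussian exponent of the theta modular transform plus prefactor --- expands into exactly those same tensors and hence vanishes. Your third stage actually spells out the coefficient bookkeeping that the paper leaves implicit ("automatically satisfied due to the constraints"), so no gap remains.
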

\begin{proof}
The $q$-certificates have the form
$$
h_i(x;q;p)=\frac{\Delta(\dots qx_i\dots ;p,q)}{\Delta(x_1,\dots,x_n;p,q)}
=\prod_{a=1}^K\theta_p(x^{m^{(a)}};q)_{m_i^{(a)}}^{\epsilon(m^{(a)})}.
$$
The conditions for $h_i$ to be elliptic in $x_j$ are
\begin{align} \nonumber &
1=\frac{h_i(\dots px_j\dots;q;p)}{h_i(x;q;p)}
=\prod_{a=1}^K \Biggl(\Big[-\prod_{l=1}^n x_l^{m_l^{(a)}}\Big]^{-\epsilon(m^{(a)})m_i^{(a)}m_j^{(a)}}
\\ &\makebox[6em]{} \times
q^{-\frac{1}{2}\epsilon(m^{(a)})m_j^{(a)}m_i^{(a)}(m_i^{(a)}-1)}
p^{-\frac{1}{2}\epsilon(m^{(a)})m_i^{(a)}m_j^{(a)}(m_j^{(a)}-1)}\Biggr),
\nonumber\end{align}
which yield the constraints
\begin{eqnarray} \label{con1}
&& \sum_{a=1}^K \epsilon(m^{(a)}) m_i^{(a)} m_j^{(a)} m_k^{(a)} = 0,\\
&& \sum_{a=1}^K \epsilon(m^{(a)}) m_i^{(a)} m_j^{(a)}     = 0
\label{con2}\end{eqnarray}
for $1\le i,j,k\le n$.
The conditions of ellipticity in $q$ have the form
\begin{align} \nonumber &
1=\frac{h_i(x;pq;p)}{h_i(x;q;p)}
=\prod_{a=1}^K \Biggl(\Big[-\prod_{l=1}^n x_l^{m_l^{(a)}}
\Big]^{-\frac{1}{2}\epsilon(m^{(a)})m_i^{(a)}(m_i^{(a)}-1)}
 \\ &\makebox[2em]{} \times
q^{-\frac{1}{6}\epsilon(m^{(a)})m_i^{(a)}(m_i^{(a)}-1)(2m_i^{(a)}-1)}
p^{-\frac{1}{6}\epsilon(m^{(a)})m_i^{(a)}(m_i^{(a)}-1)(m_i^{(a)}-2)}\Biggr).
\nonumber\end{align}
They add the constraint
\begin{equation} \label{con3}
\sum_{a=1}^K \epsilon(m^{(a)}) m_i^{(a)} =0,
\end{equation}
which guarantees that $h_i$ has equal numbers of theta functions
in the numerator and denominator.

Recall the notation
$q=e^{2\pi {\textup i} \omega_1/\omega_2 },\,
\tilde q=e^{-2\pi {\textup i}\omega_2/ \omega_1}$.
In terms of the variable
$\tau=\omega_1/\omega_2$ the full modular group $SL(2,\Z)$ is generated by the
transformations $\tau\to\tau+1$ and $\tau\to\ -1/\tau$. The
function $\theta_q(x)$
is evidently invariant with respect to the first transformation and
$$
\theta_{\tilde q}(e^{-2\pi {\textup i} u/\omega_1})=e^{\pi {\textup i}B_{2,2}(u|\omega_1,\omega_2)}
\theta_q(e^{2\pi {\textup i} u/\omega_2}),
$$
where $B_{2,2}(u|\omega_1,\omega_2)$ is the second Bernoulli
polynomial (\ref{ber2}).

After denoting $x_l=e^{2\pi {\textup i} \gamma_l/\omega_2}$ and
$p=e^{2\pi {\textup i}\omega_3/\omega_2}$,
the conditions of modular invariance of $h_i(x;q;p)$ read
\begin{align} \nonumber &
1=\frac{h_i(e^{-2\pi {\textup i} \gamma/\omega_3};e^{-2\pi {\textup i} \omega_1/\omega_3};
e^{-2\pi {\textup i}\omega_2/\omega_3})}
{h_i(e^{2\pi {\textup i} \gamma/\omega_2};e^{2\pi {\textup i} \omega_1/\omega_2};e^{2\pi {\textup i}\omega_3/\omega_2})}
\\ &\makebox[1em]{}
= \exp\left[\pi {\textup i}\sum_{a=1}^K  \epsilon(m^{(a)})\sum_{l=0}^{m_i^{(a)}-1}
B_{2,2}\Big(\omega_1l+\sum_{j=1}^n\gamma_jm_j^{(a)}\Big|\omega_2,\omega_3\Big)\right],
\nonumber\end{align}
and they are automatically satisfied due to the constraints following
from the total ellipticity.
\end{proof}

Currently the classification of totally elliptic hypergeometric terms and,
in particular, of all solutions $m^{(a)}, \epsilon(m^{(a)})$ of the Diophantine
equations \eqref{con1}, \eqref{con2}, \eqref{con3} is an open problem.
Denote $x=e^{2\pi\textup{i} u/\omega_2}$ and replace $\Gamma_{\! p,q}(x)$-functions
in \eqref{rs-term} by the modified elliptic gamma function $G(u;\mathbf{\omega})$.
Using representation \eqref{mod-e-gamma},
we find
\begin{eqnarray*} && \makebox[-2em]{}
\prod_{a=1}^K G\Big(\sum_{l=1}^n u_l m_l^{(a)};\mathbf{\omega}\Big)^{\epsilon(m^{(a)})}
=e^{\frac{\pi \textup{i}}{12}(\sum_{k=1}^3\omega_k)(\sum_{k=1}^3\omega_k^{-1})
\sum_{a=1}^K\epsilon(m^{(a)})} \Delta(\tilde x_1,\ldots,\tilde x_n;\tilde r,\tilde p),
\end{eqnarray*}
where $\tilde x_j=e^{-2\pi\textup{i} u_j/\omega_3}$.
This simple modular transformation law of the totally elliptic hypergeometric
terms takes place because of the described Diophantine equations
which guarantee that all parameter dependent contributions coming
from $B_{3,3}(u;\mathbf{\omega})$-polynomials cancel out.

The following nontrivial  elliptic hypergeometric term
with $n=6$ and $K=29$ was shown to be totally elliptic in \cite{spi:short}
$$
\Delta(x;t_1,\dots,t_6;p,q)=\frac{\prod_{j=1}^6\Gamma_{\! p,q}(t_jx^{\pm1})}
{\Gamma_{\! p,q}(x^{\pm2})\prod_{1\le i < j\le 6}\Gamma_{\! p,q}(t_it_j)},
\qquad \prod_{j=1}^6t_j=pq,
$$
or, after plugging in $t_6=pq/\prod_{i=1}^5t_i$,
$$
\Delta(x;t_1,\dots,t_5;p,q)=\frac{\prod_{j=1}^5
\Gamma_{\! p,q}(t_jx^{\pm1},t_j^{-1}\prod_{i=1}^5t_i)}
{\Gamma_{\! p,q}(x^{\pm2},\prod_{i=1}^5t_i\,x^{\pm1}))
\prod_{1\le i < j\le 5}\Gamma_{\! p,q}(t_it_j)}.
$$

\begin{theorem}[Spiridonov, 2000] Elliptic beta integral.
If $|t_j|<1$, $j=1,\ldots,6$, then
\begin{equation}
\kappa\int_{\T}\Delta(x;t_1,\dots,t_5;p,q)
\frac{dx}{x}=1,\qquad \kappa=\frac{(p;p)_\infty (q;q)_\infty}{4\pi {\textup i}},
\label{ell-beta}\end{equation}
where $\T$ is the unit circle with positive orientation.
\end{theorem}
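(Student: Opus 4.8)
The plan is to prove the equivalent assertion that, writing $J(t_1,\dots,t_6)=\kappa\int_{\T}\frac{\prod_{j=1}^6\eg(t_jx^{\pm1})}{\eg(x^{\pm2})}\frac{dx}{x}$ on the balanced subvariety $\prod_{j=1}^6 t_j=pq$, one has $J=\prod_{1\le i<j\le6}\eg(t_it_j)$; dividing by this product returns \eqref{ell-beta}. First I would record the analytic setup: for $|t_j|<1$ and $|p|,|q|<1$ the integrand is holomorphic in a neighbourhood of $\T$, so $J$ is holomorphic in the parameters, with the poles of $\eg(t_j/x)$ at $x=t_jp^aq^b$ accumulating strictly inside $\T$ and those of $\eg(t_jx)$ at $x=t_j^{-1}p^{-a}q^{-b}$ strictly outside.

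The heart of the argument is a first-order $q$-difference equation in the parameters. Using $\eg(qz)=\theta_p(z)\eg(z)$, the shift $t_1\mapsto qt_1$ multiplies the integrand by $\theta_p(t_1x^{\pm1})$ while $t_2\mapsto q^{-1}t_2$ divides it by $\theta_p(q^{-1}t_2x^{\pm1})$, and the combined shift preserves balancing. I would construct an auxiliary function $g(x)$, obtained from the integrand by inserting a single ratio of theta functions in $x$, so that $g(qx)-g(x)$ is a short linear combination of the shifted integrands; the three-term addition law $\theta_p(xw^{\pm1},yz^{\pm1})-\theta_p(xz^{\pm1},yw^{\pm1})=yw^{-1}\theta_p(xy^{\pm1},wz^{\pm1})$ quoted earlier is precisely the identity that makes this telescoping close. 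Since for $|q|<1$ the contour $\T$ may be rotated to $q\T$ without crossing poles, $\int_{\T}\big(g(qx)-g(x)\big)\frac{dx}{x}=0$, and integrating the telescoped identity over $\T$ expresses $J(\dots,qt_1,\dots,q^{-1}t_2,\dots)$ as an explicit theta-function multiple of $J(\underline t)$. By the $p\leftrightarrow q$ symmetry of $\eg$ the same computation yields the companion $p$-difference equation.

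Next I would check, using $\theta_p(x;q)_n=\eg(xq^n)/\eg(x)$, that under $t_1\mapsto qt_1,\ t_2\mapsto q^{-1}t_2$ the product $\prod_{i<j}\eg(t_it_j)$ transforms by exactly the theta-factor produced above; hence $F(\underline t)=J(\underline t)/\prod_{i<j}\eg(t_it_j)$ obeys trivial (multiplier-one) $q$- and $p$-difference equations. Viewed as a function of $t_1$ alone (with $t_1t_2$ and the remaining parameters fixed), $F$ is invariant under $t_1\mapsto qt_1$ and $t_1\mapsto pt_1$; after verifying that the poles of $J$ and of the denominator cancel so that $F$ is holomorphic, $F$ is an elliptic function of $t_1$ with incommensurate multiplicative periods $p,q$ and no poles, hence constant, and by continuation and symmetry constant in all parameters. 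Finally I would fix the constant by a pinching limit: as $t_5t_6\to1$ the inner pole at $x=t_5$ (from $\eg(t_5/x)$) and the outer pole at $x=t_6^{-1}$ (from $\eg(t_6x)$) collide on $\T$, so $J$ develops a simple pole whose residue I evaluate with $\lim_{z\to1}(1-z)\eg(z)=\frac{1}{(p;p)_\infty(q;q)_\infty}$; matching it against the corresponding pole of $\eg(t_5t_6)$ on the right and inserting $\kappa=\frac{(p;p)_\infty(q;q)_\infty}{4\pi{\textup i}}$ forces the constant to be $1$.

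I expect the main obstacle to be the first genuine step: producing the precise theta-function insertion $g(x)$ whose $q$-shift telescopes to the shifted integrands, and verifying via the addition law and the balancing condition that the spurious poles introduced by $g$ do not obstruct the rotation $\T\to q\T$. Once the difference equation is secured with the correct theta coefficients, the reduction of $F$ to a constant and the residue evaluation are routine. As a cross-check, the constant (indeed the whole identity) could instead be confirmed in the limit $p\to0$, where $\eg$ degenerates to $1/(z;q)_\infty$ and the integral becomes the Nassrallah--Rahman $q$-beta integral; but the $p$-difference equation above is still required to promote that special case to the full elliptic statement.
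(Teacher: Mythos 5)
Your proposal follows essentially the same route as the paper's proof: a telescoping $q$-difference identity for the kernel built from a theta-function insertion $g(x)$ (which the paper writes out explicitly and verifies via the theta addition law), its $p\leftrightarrow q$ partner, the conclusion that the normalized integral is constant in the parameters, and a residue/pinching computation to fix that constant to $1$. The only differences are bookkeeping (you work with the unnormalized integral $J$ and divide by $\prod_{i<j}\Gamma_{\!p,q}(t_it_j)$ afterwards, whereas the paper's kernel is already normalized), so there is nothing substantive to add.
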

\begin{proof} \cite{spi:short}
The partial $q$-difference equation for the kernel
\begin{eqnarray*}
&& \Delta(x;qt_1,t_2,\ldots,t_5;p,q)-\Delta(x;t_1,\ldots,t_5;p,q)
\\ && \makebox[4em]{}
=g(q^{-1}x)\Delta(q^{-1}x;,t_1,\ldots,t_5;p,q)-g(x)\Delta(x;t_1,\ldots,t_5;p,q),
\nonumber\end{eqnarray*}
where
$$
g(x)=
\frac{\prod_{m=1}^5\theta_p(t_mx)}{\prod_{m=2}^5\theta_p(t_1t_m)}
\frac{\theta_p(t_1\prod_{j=1}^5t_j)}
{\theta_p(x^2,x\prod_{j=1}^5t_j)}\frac{t_1}{x},
$$
and its partner obtained after permutation of $p$ and $q$
show that the integral of interest is a constant independent of
the parameters $t_j$.
Giving to $t_j$ special values such that the integral is saturated by
(i.e., its value is given by)
the sum of residues of a fixed pair of poles, we find the needed constant.
\end{proof}

For $p\to 0$ followed by the $t_5\to 0$ and
$$
t_1 = q^{\alpha-1/2}, \ t_2 =-q^{\beta-1/2},
\ t_3=q^{1/2}, \ t_4 = -q^{1/2}, \quad q\to1
$$
limit, equality (\ref{ell-beta}) reduces to the classical Euler beta-integral
evaluation
$$
\int_0^1t^{\alpha-1}(1-t)^{\beta-1}dt=\frac{\Gamma(\alpha)\Gamma(\beta)}
{\Gamma(\alpha+\beta)},\quad
\text{Re}(\alpha),\text{Re}(\beta)>0.
$$
An elliptic analogue of the Gau\ss{} hypergeometric function
${}_2F_1(a,b,;c;z)$ has the form \cite{spi:theta2,spi:sur}
\begin{equation}
V(t_1,\dots,t_8;p,q)=\kappa\int_\T\frac{\prod_{j=1}^8\Gamma_{\! p,q}(t_jx^{\pm 1})}
{\Gamma_{\! p,q}(x^{\pm2})}\frac{dx}{x}, \qquad \prod_{j=1}^8t_j=(pq)^2,
\label{ehf}\end{equation}
where $|t_j|<1$. For $t_7t_8=pq$ (and other similar
restrictions), it reduces to the elliptic beta integral.
This function obeys symmetry transformations attached to the exceptional
root system $E_7$ and satisfies the elliptic hypergeometric equation,
a second order difference equation with some elliptic coefficients
(reducing in a special limit to the standard hypergeometric equation).

\section{Multidimensional integrals}

Let us introduce the meromorphic function
$$
\Delta_n(z,t;p,q)=\prod_{1\leq i<j\leq n} \frac{1}{\eg(z_i^{\pm 1}z_j^{\pm 1})}
\prod_{j=1}^n\frac{\prod_{i=1}^{2n+2m+4}\eg(t_iz_j^{\pm 1})}
{\eg(z_j^{\pm 2})},
$$
and associate with it the following multidimensional elliptic hypergeometric
integral (type I integral for the $BC_n$ root system)
$$
I_n^{(m)}(t_1,\ldots,t_{2n+2m+4})=\kappa_n\int_{\T^n}
\Delta_n(z,t;p,q)\prod_{j=1}^n\frac{dz_j}{z_j},
$$
where $|t_j|<1$,
$$
\prod_{j=1}^{2n+2m+4}t_j=(pq)^{m+1},\qquad
\kappa_n=\frac{(p;p)_\infty^n(q;q)_\infty^n}{2^n n!(2\pi {\textup i})^n}.
$$

\begin{theorem} \cite{die-spi:selberg}
The integral $I^{(m)}_n$ satisfies the $2\binom{2n+2m+4}{n+2}$-set of $(n+2)$-term
recurrences
 \begin{equation}
\sum_{i=1}^{n+2}
\frac{t_i}{\prod_{j=1,\, j\ne i}^{n+2} \theta_p(t_i t_j^{\pm 1})}
I^{(m)}_n(t_1,\dots,qt_i\dots,t_{2n+2m+4})
=0,
 \label{rec-rel1}\end{equation}
where $\prod_{j=1}^{2n+2m+4}t_j=(pq)^mp.$
\end{theorem}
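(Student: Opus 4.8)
The plan is to reduce the recurrence to an exact, pointwise identity between the integrand kernels, so that no residue or contour manipulation is needed. First I would record the effect of the shift $t_i\mapsto qt_i$ on the kernel. Since $t_i$ enters $\Delta_n$ only through $\prod_{j=1}^n\eg(t_iz_j^{\pm1})$, the first equation in \eqref{e-gam-eq}, namely $\eg(qz)=\theta_p(z)\eg(z)$, gives at once
\[
\Delta_n(z;\dots,qt_i,\dots;p,q)=\Big(\prod_{k=1}^n\theta_p(t_iz_k^{\pm1})\Big)\,\Delta_n(z;t_1,\dots,t_{2n+2m+4};p,q),
\]
where the right-hand kernel carries the base balancing $\prod_jt_j=(pq)^mp$, while each shifted kernel on the left carries $\prod_jt_j=(pq)^{m+1}$, exactly the balancing required of $I_n^{(m)}$; thus every term $I_n^{(m)}(\dots,qt_i,\dots)$ is legitimately defined. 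Factoring out the common base kernel, the left-hand side of \eqref{rec-rel1} becomes $\kappa_n\int_{\T^n}S_n(z)\,\Delta_n(z;t)\prod_j dz_j/z_j$ with
\[
S_n(z_1,\dots,z_n)=\sum_{i=1}^{n+2}\frac{t_i\prod_{k=1}^n\theta_p(t_iz_k^{\pm1})}{\prod_{j=1,\,j\ne i}^{n+2}\theta_p(t_it_j^{\pm1})}.
\]
It therefore suffices to prove the purely theta-functional identity $S_n\equiv0$.

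This identity is the crux of the argument, and I expect it to be the main obstacle. I would establish it by induction on $n$. The base case $n=0$ is the two-term relation $t_1/\theta_p(t_1t_2^{\pm1})+t_2/\theta_p(t_2t_1^{\pm1})=0$, which follows immediately from $\theta_p(x^{-1})=-x^{-1}\theta_p(x)$. For the inductive step I would view $S_n$ as a function of the single variable $z_n$, with the others frozen. Using $\theta_p(pz)=-z^{-1}\theta_p(z)$ one checks that each summand, hence $S_n$, is an even theta function obeying $S_n(pz_n)=p^{-1}z_n^{-2}S_n(z_n)$; the space of such functions is two-dimensional, so any element with more than one zero-pair $\{z_n,z_n^{-1}\}$ must vanish identically. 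Now specialize $z_n=t_l$: the factor $\theta_p(t_lz_n^{-1})$ kills the $i=l$ summand, while in every remaining summand the numerator factor $\theta_p(t_it_l^{\pm1})$ cancels the $j=l$ factor of the denominator, leaving exactly $S_{n-1}(z_1,\dots,z_{n-1})$ built from the $n+1$ parameters $\{t_i\}_{i\ne l}$. By the inductive hypothesis this is zero. Hence $S_n$ vanishes at the $n+2$ points $z_n=t_1,\dots,t_{n+2}$ and, by evenness, at their inverses; for $n\ge1$ these give at least two distinct zero-pairs, more than a nonzero element of the two-dimensional space can carry, so $S_n\equiv0$ in $z_n$, and therefore identically. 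The delicate points to watch are the precise quasi-periodicity bookkeeping and the genericity of the $t_i$ (that the pairs $\{t_i,t_i^{-1}\}$ are distinct modulo $p^{\Z}$), after which the general case follows by continuity.

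With $S_n\equiv0$ the integrand of the combined sum vanishes pointwise on $\T^n$, so the integral — a finite sum of individually convergent pieces, since for $|t_j|,|qt_i|<1$ the kernel has no poles on the unit torus — is zero, which is precisely \eqref{rec-rel1}. Finally I would account for the full set of $2\binom{2n+2m+4}{n+2}$ relations: because $\Delta_n$ is symmetric under permutations of $t_1,\dots,t_{2n+2m+4}$, the same computation applied to any $(n+2)$-element subset of the parameters yields one relation for each of the $\binom{2n+2m+4}{n+2}$ choices, and the overall $p\leftrightarrow q$ symmetry of $\Delta_n$ supplies the partner relations accounting for the factor $2$.
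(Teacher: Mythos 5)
The paper itself gives no proof of this theorem --- it is quoted from \cite{die-spi:selberg} with a bare citation --- so there is nothing internal to compare against; judged on its own, your argument is correct and is essentially the argument of the cited source. The reduction to the pointwise identity $S_n\equiv 0$ is right (since $\eg(qt_iz_k^{\pm1})=\theta_p(t_iz_k^{\pm1})\eg(t_iz_k^{\pm1})$, the shifted kernels all share the common unbalanced factor $\Delta_n(z;t)$), and your inductive proof of $S_n\equiv 0$ is the standard one: the base case follows from $\theta_p(x^{-1})=-x^{-1}\theta_p(x)$, each summand transforms by $p^{-1}z_n^{-2}$ under $z_n\to pz_n$, and specializing $z_n=t_l$ kills the $l$-th term and collapses the rest to $S_{n-1}$ on the remaining $n+1$ parameters, giving $n+2\ge 3$ generically distinct zeros of an element of a two-dimensional theta space (for $n=1$ the identity $S_1\equiv0$ is exactly the addition law displayed in Section 3). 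The role of the hypothesis $\prod_j t_j=(pq)^mp$ is, as you say, only to make each shifted argument satisfy the balancing $\prod_j t_j=(pq)^{m+1}$ required in the definition of $I_n^{(m)}$; the identity $S_n\equiv0$ itself needs no balancing. The only soft spot is the bookkeeping of the factor $2$ in $2\binom{2n+2m+4}{n+2}$: the $p\leftrightarrow q$ partner family shifts by $p$ with $\theta_q$-coefficients and requires the swapped balancing $\prod_j t_j=(pq)^mq$, so the displayed condition applies only to half of the stated set --- a minor interpretive point that does not affect the proof.
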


\begin{theorem} \cite{rai-spi:det}
If $t_1\cdots t_{2m+2n+4} = (pq)^{m+1}q$, then we have the following
$2\binom{2n+2m+4}{m+2}$-set of $(m+2)$-term recurrences for $I^{(m)}_n$:
$$
\sum_{1\le k\le m+2}
\frac{\prod_{m+3\le i\le 2n+2m+4} \theta_p(t_it_k/q)}
     {t_k\prod_{1\le i\le m+2;i\ne k} \theta_p(t_i/t_k)}
I^{(m)}_n(t_1,\dots,q^{-1}t_k\dots,t_{2n+2m+4}) =0.
$$
\end{theorem}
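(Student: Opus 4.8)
The plan is to imitate the proof of the elliptic beta integral given above: exhibit the asserted linear combination of parameter-shifted kernels as a sum of total $q$-differences in the integration variables $z_1,\dots,z_n$, and then integrate over $\T^n$, where every total $q$-difference dies after a contour shift. First note that $\eg$, the kernel $\Delta_n$, the measure $\prod_j dz_j/z_j$, the torus $\T^n$, the constant $\kappa_n$, and the balancing condition are all invariant under $p\leftrightarrow q$, so $I^{(m)}_n(t;p,q)=I^{(m)}_n(t;q,p)$. Applying $p\leftrightarrow q$ to the stated relation yields its mirror, the corresponding $(m+2)$-term relation for the balancing $t_1\cdots t_{2n+2m+4}=(pq)^{m+1}p$ with $\theta_q$-coefficients and $p^{-1}$-shifts; together with the $\binom{2n+2m+4}{m+2}$ choices of which parameters are shifted (the integral being symmetric in all the $t_i$) this reproduces the full $2\binom{2n+2m+4}{m+2}$-element family. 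It therefore suffices to prove the one displayed relation, with shifted parameters $t_1,\dots,t_{m+2}$.

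Next I would reduce everything to a kernel identity. Rewriting $\eg(qw)=\theta_p(w)\eg(w)$ as $\eg(q^{-1}w)=\eg(w)/\theta_p(q^{-1}w)$, the shift $t_k\mapsto q^{-1}t_k$ multiplies the kernel by an explicit theta factor,
$$
\Delta_n(z;t_1,\dots,q^{-1}t_k,\dots,t_{2n+2m+4})
=\Delta_n(z;t)\prod_{j=1}^n\frac{1}{\theta_p(q^{-1}t_kz_j^{\pm1})}.
$$
Writing $c_k$ for the coefficients in the statement, the relation becomes the vanishing of $\kappa_n\int_{\T^n}\Delta_n(z;t)\,B(z)\prod_j dz_j/z_j$, where
$$
B(z)=\sum_{k=1}^{m+2}c_k\prod_{j=1}^n\frac{1}{\theta_p(q^{-1}t_kz_j^{\pm1})},
\qquad
c_k=\frac{\prod_{m+3\le i\le 2n+2m+4}\theta_p(t_it_k/q)}
{t_k\prod_{1\le i\le m+2,\,i\ne k}\theta_p(t_i/t_k)}.
$$
The crux is to produce explicit theta-ratio factors $g_1,\dots,g_n$ realizing $\Delta_n(z;t)\,B(z)$ as a sum of one-variable total $q$-differences,
$$
\Delta_n(z;t)\,B(z)=\sum_{j=1}^n\Bigl(
g_j(\dots,q^{-1}z_j,\dots)\,\Delta_n(\dots,q^{-1}z_j,\dots;t)
-g_j(z)\,\Delta_n(z;t)\Bigr).
$$
Granting this, the substitution $z_j\mapsto qz_j$ turns the $j$-th summand's $\T^n$-integral into the difference of integrals over $q^{-1}\T$ and $\T$ in the variable $z_j$; for $|t_j|<1$ the contour is shifted without crossing poles (equivalently, the crossed residues telescope to zero), so each summand integrates to zero and the recurrence follows.

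The main obstacle is exactly this total $q$-difference identity, which is a genuine theta-function identity and not a formal rearrangement. After dividing by $\Delta_n(z;t)$ it takes the form of an elliptic partial-fraction expansion in which the stated $c_k$ are forced as the residues matching the poles in $t_1,\dots,t_{m+2}$; this is why the relation carries $m+2$ terms, one per shifted parameter, in contrast with the $n+2$ terms of the preceding theorem. I would first attempt it by induction on $n$, peeling off the variable $z_n$ and using the one-dimensional elliptic partial-fraction identity together with the contiguous relation already exploited for the elliptic beta integral, the inductive step reorganizing the $z_n$-dependence as a total $q$-difference plus a $BC_{n-1}$ instance, with base case the scalar theta identity. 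A cleaner and more robust route, which I expect is the one actually needed, is to invoke the determinant representation of $I^{(m)}_n$ from \cite{rai-spi:det}: once the integral is written as a determinant whose size is governed by $m$, the $(m+2)$-term recurrence becomes a cofactor (Plücker-type) identity for that determinant, and the $n\leftrightarrow m$ asymmetry between this theorem and the previous one is then transparent as the duality between the type~I $BC_n$ and $BC_m$ integrals.
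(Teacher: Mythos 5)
First, be aware that the paper offers no proof of this statement: the theorem is imported from \cite{rai-spi:det} with a bare citation, so your argument has to stand entirely on its own. The framing and the bookkeeping are fine --- the reduction by the permutation symmetry in the $t_i$ and the $p\leftrightarrow q$ symmetry of $\eg$, $\Delta_n$ and $\kappa_n$ to a single relation is legitimate, and the formula $\Delta_n(z;\ldots,q^{-1}t_k,\ldots)=\Delta_n(z;t)\prod_{j=1}^n\theta_p(q^{-1}t_kz_j^{\pm1})^{-1}$ is correct. But the proof itself never happens. The entire content of the theorem sits in the total $q$-difference decomposition of $\Delta_n(z;t)B(z)$, which you correctly identify as ``the main obstacle'' and then do not produce: no certificates $g_j$ are exhibited, and neither of your two proposed routes is viable as described. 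The partial-fraction route has a structural mismatch you only half-acknowledge: a one-variable elliptic interpolation identity played against the $BC_n$ kernel naturally yields $n+2$ nodes --- that is precisely why the preceding recurrence \eqref{rec-rel1} has $n+2$ terms --- whereas here the term count $m+2$ is invisible to the $z$-dependence of the kernel and enters only through the number of parameters and the balancing condition, so the ``residues matching the poles in $t_1,\dots,t_{m+2}$'' are not residues of anything in the integration variables, and there is no evident analogue of the function $g$ from the elliptic beta integral proof.

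The fallback route is not merely unproven but circular relative to the architecture of this very section: the fundamental matrix $M$ and the determinant structure of \cite{rai-spi:det}, as well as the transformation \eqref{trafo} (under which this recurrence is indeed the image of \eqref{rec-rel1} for $I_m^{(n)}$ via $t_i\mapsto\sqrt{pq}/t_i$, which is where the $n\leftrightarrow m$ asymmetry really comes from), are all \emph{derived from} the two recurrence families together with a uniqueness argument. An independent proof of the $(m+2)$-term relation is exactly what that derivation requires and exactly what your proposal lacks; deducing it from the determinant or from \eqref{trafo} would collapse the logic of the section unless you import Rains' operator proof from \cite{rai:trans} wholesale. Finally, even granting a certificate identity, the contour step is glossed: moving $z_j$ from $\T$ to $q^{-1}\T$ does cross poles of the kernel at $z_j=t_i^{-1}$ whenever $|q|<|t_i|<1$, so one must first restrict the parameters and then continue analytically; ``the crossed residues telescope to zero'' is an assertion, not an argument.
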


Using these two sets of recurrences, for each variable $t_k$
it is possible to define the fundamental matrix function
$M_{ij}\propto I^{(m)}_n($ $\dots, p^{-1}t_i,\dots,q^{-1}t_j,\dots)$,
satisfying the two $\binom{n+m}{n}\times \binom{n+m}{n}$-matrix equations
$$
M(qt_k) = A(t_k)M(t_k), \qquad   M(pt_k) = M(t_k)B(t_k),
$$
for some matrices $A(t_k)$ with $p$-elliptic entries and  $B(t_k)$
with $q$-elliptic entries.
Taken together with the condition of meromorphicity of $M$ in
each variable $t_i$,
this set of equations  determines the matrix $M$
uniquely up to multiplication by a constant \cite{rai-spi:det}.

It can be easily checked that
$$
\prod_{1\leq r<s\leq 2n+2m+4}\eg(t_rt_s)\;
I_m^{(n)}\left(\frac{\sqrt{pq}}{t_1},\ldots,\frac{\sqrt{pq}}{t_{2n+2m+4}}\right)
$$
satisfies the same recurrences as $I_n^{(m)}(t_1,\ldots,t_{2n+2m+4})$
(the recurrences are just swapped after replacing $I^{(m)}_n$
by this expression).
But there is only one solution of the above
system of equations, hence, these two expressions are proportional to each
other and the constant of proportionality is found by residue calculus.

\begin{theorem} \cite{rai:trans}
The integrals $I^{(m)}_n$ satisfy the relation
\begin{eqnarray} \nonumber
&&I_n^{(m)}(t_1,\ldots,t_{2n+2m+4})
\\ && \makebox[2em]{}
=\prod_{1\leq r<s\leq 2n+2m+4}\eg(t_rt_s)\;
I_m^{(n)}\left(\frac{\sqrt{pq}}{t_1},\ldots,\frac{\sqrt{pq}}{t_{2n+2m+4}}\right).
\label{trafo}\end{eqnarray}
\end{theorem}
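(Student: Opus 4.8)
The plan is to recognize both sides of \eqref{trafo} as solutions of one and the same matrix difference system and then appeal to the uniqueness statement recorded above. Write $F(t)=I_n^{(m)}(t_1,\dots,t_{2n+2m+4})$ for the left-hand side and $G(t)=\prod_{1\le r<s\le 2n+2m+4}\eg(t_rt_s)\,I_m^{(n)}(\sqrt{pq}/t_1,\dots,\sqrt{pq}/t_{2n+2m+4})$ for the right-hand side. A first consistency check is that the two balancing conditions agree: with $s_j=\sqrt{pq}/t_j$ one has $\prod_j s_j=(pq)^{n+m+2}/\prod_j t_j=(pq)^{n+1}$, so $I_m^{(n)}(s)$ is well defined exactly on the locus $\prod_j t_j=(pq)^{m+1}$ on which $F$ lives.

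The conceptual heart of the argument is to verify that $G$ obeys the same two recurrence systems as $F$, namely the $(n+2)$-term relations \eqref{rec-rel1} and the $(m+2)$-term relations recorded above. The point is that the shift $t_i\mapsto qt_i$ on $G$ becomes the shift $s_i\mapsto q^{-1}s_i$ inside $I_m^{(n)}$; the recurrence controlling $q^{-1}$-shifts of $I_m^{(n)}$ is, after the interchange $n\leftrightarrow m$, an $(n+2)$-term relation matching \eqref{rec-rel1}, while $t_i\mapsto q^{-1}t_i$ likewise reproduces the $(m+2)$-term relation. The theta-function prefactor is precisely what reconciles the coefficients: under $t_i\mapsto qt_i$ the functional equation $\eg(qz)=\theta_p(z)\eg(z)$ of \eqref{e-gam-eq} multiplies $\prod_{r<s}\eg(t_rt_s)$ by $\prod_{s\ne i}\theta_p(t_it_s)$, and a short computation using the balancing condition shows that these extra theta factors convert the coefficients of the $I_m^{(n)}$-recurrence into those appearing in the $F$-recurrence. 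This is what is meant by saying that the two recurrence systems are simply swapped under the duality $t\mapsto\sqrt{pq}/t$, $n\leftrightarrow m$.

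Granting this, I would assemble for a distinguished variable $t_k$ the fundamental matrix $M$ with entries $M_{ij}\propto I_n^{(m)}(\dots,p^{-1}t_i,\dots,q^{-1}t_j,\dots)$, as described above: the $q$-recurrence gives $M(qt_k)=A(t_k)M(t_k)$ with $A$ having $p$-elliptic entries, and the $p$-recurrence gives $M(pt_k)=M(t_k)B(t_k)$ with $B$ having $q$-elliptic entries. Since the matrix built from $G$ satisfies the identical system and, together with meromorphicity in $t_k$, such a system has a unique solution up to a constant, $F$ and $G$ must be proportional: $F(t)=c\,G(t)$ with $c=c(p,q)$ independent of all the $t_j$.

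The remaining task, and the step I expect to be the main obstacle, is to show $c=1$ by residue calculus. The plan is to specialize the parameters so that a pair of poles of the kernel pinches the contour $\T^n$ (for instance by letting some $t_it_j\to pq$), so that both $I_n^{(m)}$ and $I_m^{(n)}$ collapse onto the residue at a single fixed configuration of poles and, by induction on $n+m$, are ultimately evaluated through the elliptic beta integral \eqref{ell-beta}. One then compares both sides at this boundary, using the reflection relation $\eg(z)\eg(pq/z)=1$ and the limit $\lim_{z\to1}(1-z)\eg(z)=1/((p;p)_\infty(q;q)_\infty)$ to see that the prefactor $\prod_{r<s}\eg(t_rt_s)$ exactly absorbs the discrepancy produced by the substitution $t\mapsto\sqrt{pq}/t$. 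The delicate part is the residue bookkeeping on both sides; once it is arranged so that all elliptic gamma factors cancel in pairs, the equality $c=1$ drops out and \eqref{trafo} follows.
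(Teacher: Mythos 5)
Your proposal follows essentially the same route as the paper: the text immediately preceding the theorem sketches exactly this argument --- both sides satisfy the swapped $(n+2)$- and $(m+2)$-term recurrences, the fundamental matrix $M_{ij}\propto I^{(m)}_n(\dots,p^{-1}t_i,\dots,q^{-1}t_j,\dots)$ together with meromorphicity forces uniqueness up to a constant, and the constant is fixed by residue calculus. Your balancing-condition check and the identification of the theta prefactors arising from $\eg(qz)=\theta_p(z)\eg(z)$ are consistent with that sketch.
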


The main new results of the present paper are described in the following two
theorems. They represent a natural extension of some of the results of
\cite{spi:short} and \cite{rai-spi:det}.

\begin{theorem}
The ratio
$$
\rho(z,y;t;p,q)=\prod_{1\leq r<s\leq 2n+2m+4}\eg(t_rt_s)^{-1}
\frac{\Delta_n(z;t;p,q)}{\Delta_m(y/\sqrt{pq};\sqrt{pq}/t;p,q)}
$$
is a totally elliptic hypergeometric term.
\end{theorem}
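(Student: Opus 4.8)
The plan is to recognize $\rho$ as an instance of the general form \eqref{rs-term} and then reduce total ellipticity to the Diophantine conditions \eqref{con1}--\eqref{con3}, which by the Rains--Spiridonov theorem already carry modular invariance for free. First I would substitute $\tilde z_j=y_j/\sqrt{pq}$ and $\tilde t_i=\sqrt{pq}/t_i$ into $\Delta_m$ and observe that every half-integer power of $pq$ cancels: e.g. $\tilde t_i\tilde z_j=y_j/t_i$, $\tilde t_i\tilde z_j^{-1}=pq/(t_iy_j)$, $\tilde z_i\tilde z_j=y_iy_j/(pq)$, so all arguments of the gamma factors of $\rho$ are Laurent monomials in the $z_i,y_j,t_r$ times integer powers of $pq$. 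The offsets of the form $pq/(t_iy_j)$ and $pq/y_j^2$ are removed at once by the reflection equation $\eg(w)\eg(pq/w)=1$, while the remaining offsets are cleared by the shift equations \eqref{e-gam-eq} at the cost of explicit $\theta_p,\theta_q$ factors; these are tracked, but being products of theta functions of monomials they contribute only elliptic corrections to the certificates. Thus $\rho$ has exactly the shape \eqref{rs-term} over the combined variable set $(z_1,\dots,z_n,y_1,\dots,y_m,t_1,\dots,t_{2n+2m+4})$, with the balancing $\prod_i t_i=(pq)^{m+1}$ carried along.

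It then suffices to verify \eqref{con1}--\eqref{con3}, and I would organize this by the type of variable. For an integration variable $z_i$ the prefactor $\prod_{r<s}\eg(t_rt_s)^{-1}$ and the dual factor $\Delta_m$ are constant, so the $z_i$-certificate of $\rho$ equals that of the standalone kernel $\Delta_n$; its $p$-ellipticity and $q$-ellipticity are precisely the defining properties of the $BC_n$ type I integrand and hold by virtue of $\prod_i t_i=(pq)^{m+1}$. Symmetrically, the $y_j$-certificate reduces to that of $\Delta_m$ evaluated at $\tilde t=\sqrt{pq}/t$, whose balancing $\prod_i\tilde t_i=(pq)^{n+1}$ is equivalent to the same relation. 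The mixed conditions coupling $z_i$ to $z_{i'}$, or any $z$ to any $y$, are vacuous, since no gamma factor of $\rho$ contains both an integration variable of $\Delta_n$ and one of $\Delta_m$.

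The essential case is a parameter $t_r$, and here the three groups of factors must conspire. Shifting $t_r\to qt_r$ produces $\prod_{s\ne r}\theta_p(t_rt_s)^{-1}$ from the prefactor, $\prod_{j=1}^n\theta_p(t_rz_j^{\pm1})$ from $\Delta_n$, and $\prod_{j=1}^m\theta_p(t_ry_j)\theta_p(y_j/(qt_r))$ from $\Delta_m^{-1}$ (using $\theta_p(p/(t_ry_j))=\theta_p(t_ry_j)$). Taken at face value the numerator and denominator carry different numbers of theta functions, so the linear condition \eqref{con3} appears to fail --- this is exactly the point at which the balancing must be used: eliminating one parameter via $\prod_i t_i=(pq)^{m+1}$ couples the shift of $t_r$ to a compensating $q^{-1}$-shift of the eliminated parameter throughout the prefactor, $\Delta_n$ and $\Delta_m^{-1}$, and these extra theta factors restore the count and enforce \eqref{con3}. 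The remaining quadratic and cubic identities \eqref{con2}, \eqref{con1} mixing $t_r$ with the other $t_s$, with the $z_i$, and with the $y_j$ then cancel because the $\Delta_n$- and $\Delta_m$-contributions are interchanged under the duality $t\leftrightarrow\sqrt{pq}/t$ of \eqref{trafo}, while the prefactor $\prod_{r<s}\eg(t_rt_s)^{-1}$ is tailored precisely to annihilate the leftover parameter--parameter terms.

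I expect this last cancellation --- showing that every cubic, $B_{3,3}$-type contribution built from the parameters vanishes on the balancing locus --- to be the \emph{main obstacle}, since it is exactly where the specific coefficient in \eqref{trafo} and the incommensurability of $p,q$ enter, and where the delicate bookkeeping of the $pq$-offsets introduced above has to close up. Once \eqref{con1}--\eqref{con3} are established for all three variable types, the Rains--Spiridonov theorem guarantees that every certificate of $\rho$ is not only $p$-elliptic in $z_i,y_j,t_r$ and in $q$ but also modular invariant, which is the assertion that $\rho$ is a totally elliptic hypergeometric term.
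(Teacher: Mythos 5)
Your strategy---cast $\rho$ as an instance of the ansatz \eqref{rs-term} and then deduce total ellipticity from the Diophantine conditions \eqref{con1}--\eqref{con3}---runs into an obstruction that the paper itself flags in the first lines of its proof: after the substitution $y\to y/\sqrt{pq}$, $\sqrt{pq}/t$, the term contains elliptic gamma functions with \emph{non-removable} integer powers of $pq$ in their arguments, so $\rho$ is precisely \emph{not} of the form \eqref{rs-term}. Concretely, the factors $\eg(pq/(t_ry_j))$, $\eg(pq/(y_iy_j))$, $\eg(pq/y_j^2)$ are indeed killed by reflection, as you say, but their companions $\eg(y_iy_j/(pq))$ and $\eg(y_j^2/(pq))$ are not: the reflection partner of $\eg(w/(pq))$ is $\eg((pq)^2/w)$, which still carries a power of $pq$. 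Your fallback---clearing the offset with the shift equations \eqref{e-gam-eq}---generates factors of the type $\theta_q(w/p)$ and $\theta_p(w/q)$, i.e.\ theta functions whose base is one of $p,q$ while the argument involves the other. These are not gamma functions of monomials in the declared variables, the ansatz \eqref{rs-term} admits no theta prefactors at all, and the quasiperiodicity bookkeeping that produces \eqref{con1}--\eqref{con3} does not apply to them. So the reduction on which your entire argument rests is unavailable, and the ``main obstacle'' is not the cubic $B_{3,3}$ cancellation you anticipate but the fact that the framework of Theorem 5 does not cover this term.

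The paper instead proves the statement by brute force: it writes each certificate $h_k^{(z)}$, $h_k^{(y)}$, $h_{ik}^{(t)}$ explicitly as a ratio of theta functions and verifies, case by case, invariance under $z_l\to pz_l$, under $t_a\to pt_a,\ t_b\to p^{-1}t_b$, and under $q\to pq$ accompanied by the compensating shift $t_1\to p^{m+1}t_1$ needed to stay on the balancing variety $\prod_r t_r=(pq)^{m+1}$. Two smaller corrections to your reasoning: ellipticity in $q$ of the $z$-certificate is not ``the defining property of the $BC_n$ integrand''---Definition 7 only demands $p$-ellipticity in the integration variables, and ellipticity in $q$ and in the parameters $t_r$ is exactly the additional content of total ellipticity that must be checked here; and modular invariance is a \emph{consequence} of total ellipticity (the content of Theorem 5), not part of what the present theorem asserts, so your closing step inverts the logical direction of that result.
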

\begin{proof}
Explicitly,
\begin{eqnarray*}
&& \rho(z,y;t;p,q)=\prod_{1\leq r<s\leq 2n+2m+4}\frac{1}{\eg(t_rt_s)}
 \frac{\prod_{1\leq i<j\leq m}\eg\left(\frac{y_iy_j}{pq},
\frac{pq}{y_iy_j},\frac{y_i}{y_j},\frac{y_j}{y_i}\right)}
{\prod_{1\leq i<j\leq n}\eg(z_i^{\pm 1}z_j^{\pm 1})}
\\ && \makebox[4em]{} \times
\frac{\prod_{j=1}^m\eg\left(\frac{y_j^2}{pq},\frac{pq}{y_j^2}\right)}
{\prod_{j=1}^n\eg(z_j^{\pm 2})}
\prod_{r=1}^{2n+2m+4}\frac{\prod_{j=1}^n\eg(t_rz_j^{\pm 1})}
{\prod_{j=1}^m\eg\left(\frac{y_j}{t_r},
\frac{pq}{t_ry_j}\right)}.
\end{eqnarray*}
This term contains elliptic gamma functions with non-removable integer
powers of $pq$ in their arguments showing that ansatz (\ref{rs-term})
is not the most general one leading to interesting integrals.
After substitution $y_j\to \sqrt{pq}y_j$, we see that the $\rho$-function becomes
equal to a ratio of the kernels of integrals in (\ref{trafo}).

The $z_k$-variable $q$-certificate is
\begin{eqnarray*}
&& h_{k}^{(z)}(z;t;q;p)=\frac{\rho(\dots qz_k\dots,y;t;p,q)}{\rho(z,y;t;p,q)}
\\ && \makebox[4em]{}
=\frac{\theta_p(q^{-2}z_k^{-2},q^{-1}z_k^{-2})}
{\theta_p(qz_k^{2},z_k^{2})}
\prod_{i=1,\neq k}^n\frac{\theta_p(q^{-1}z_k^{-1}z_i^{\pm1})}
{\theta_p(z_kz_i^{\pm1})}
\prod_{r=1}^{2n+2m+4}\frac{\theta_p(t_rz_k)}
{\theta_p(q^{-1}t_rz_k^{-1})}.
\end{eqnarray*}
The equality $h_{k}^{(z)}(\dots pz_l \dots)=h_{k}^{(z)}(z;t;q;p)$
for $l\neq k$ is automatically true, and for $l=k$ it requires the
balancing condition
$$
\frac{h_{k}^{(z)}(\dots pz_k \dots)}{h_{k}^{(z)}(z;t;q;p)}
=\frac{(pq)^{2m+2}}{\prod_{r=1}^{2n+2m+4}t_r^2}=1.
$$
The equality $h_{k}^{(z)}(\dots pt_i, \dots, p^{-1}t_j\dots)
=h_{k}^{(z)}(z;t;q;p)$ is valid automatically. Most non-trivial
is the ellipticity in $q$ since the balancing condition
contains the multiplier $q^{m+1}$. Assuming that
$t_1=(pq)^{m+1}/\prod_{r=2}^{2n+2m+4}t_r$, we find
\begin{eqnarray*}
&&
\frac{h_{k}^{(z)}(z;p^{m+1}t_1,t_2\dots;pq;p)}{h_{k}^{(z)}(z;t;q;p)}
=\frac{1}{(pqz_k)^{4}}\prod_{i=1,\neq k}^n\frac{1}{(pqz_k)^{2}}
\prod_{r=2}^{2n+2m+4}\frac{-pqz_k}{t_r}\,
\\ && \makebox[13em]{} \times
\frac{(-q^{-1}t_1z_k^{-1})^{m}p^{m(m-1)/2}}
{(-t_1z_k)^{m+1}p^{m(m+1)/2}}
=1.
\end{eqnarray*}
Taken together, these conditions guarantee that $h_k^{(z)}$ is invariant with
respect to all transformations $z_l\to p^{n_l}z_l$, $t_r\to
 p^{m_r}t_r$, $q\to p^Nq$ such that $n_l, m_r, N\in \Z$ and
$\sum_{r=1}^{2n+2m+4}m_r=N(m+1)$.

The $y_k$-variables $q$-certificates are equivalent to $h_{k}^{(z)}$. Therefore
it remains to consider the $t_j$-variables $q$-certificates
\begin{eqnarray*}
&& h_{ik}^{(t)}(z;t;q;p)=\frac{\rho(\dots qt_i,\dots,q^{-1}t_k\dots)}
{\rho(z,y;t;p,q)}
\\ && \makebox[4em]{}
=\prod_{l=1,\neq i,k}^{2n+2m+4}\frac{\theta_p(q^{-1}t_kt_l)}
{\theta_p(t_it_l)}\prod_{j=1}^{n}\frac{\theta_p(t_iz_j^{\pm1})}
{\theta_p(q^{-1}t_kz_j^{\pm1})}
 \prod_{j=1}^{m}\frac{\theta_p(q^{-1}t_i^{-1}y_j,
pt_i^{-1}y_j^{-1})}{\theta_p(pqt_k^{-1}y_j^{-1},t_k^{-1}y_j)}.
\end{eqnarray*}
The ellipticity in $z_j$ and $y_l$ is easy to check.
The invariance of $h_{ik}^{(t)}$
with respect to the transformations $t_a\to pt_a, t_b\to p^{-1}t_b$ for various
choices of the indices $a$ and $b$ is also verified without difficulties.
Finally, assuming that $t_s=(pq)^{m+1}/\prod_{r=1,\neq s}^{2n+2m+4}t_r$, where
$s\neq i,k$, we find
\begin{eqnarray*}
&&
\frac{h_{ik}^{(t)}(\dots p^{m+1}t_s\dots;pq;p)}{h_{ik}^{(t)}(z;t;q;p)}=
\frac{(-t_it_s)^{m+1}p^{m(m+1)/2}}{(-q^{-1}t_kt_s)^mp^{m(m-1)/2}}
\\ && \makebox[8em]{} \times
\prod_{l=1,\neq i,k,s}^{2n+2m+4}\frac{-t_kt_l}{pq}
\prod_{j=1}^n\frac{p^2q^2}{t_k^2}\prod_{j=1}^m\frac{1}{t_it_k} =1,
\end{eqnarray*}
and a similar (though even more complicated) picture holds for $s=i$ or $s=k$,
which completes the proof.
\end{proof}

As mentioned already, sums of residues of particular sequences
of the integral kernel poles form elliptic hypergeometric series.
Under special constraints, exact computation or symmetry transformation
formulae for integrals reduce to particular totally elliptic functions
identities \cite{spi:sur}. An analogue of the total ellipticity requirement for
elliptic beta integrals was (implicitly) found in \cite{spi:short}.
The value of the above theorem consists in the first extension of
the latter property to an integral transformation formula.
In general, this ``ratio of integral kernels" criterion provides a powerful
technical tool for conjecturing new relations between integrals.

Indeed, an analogous result can be established for the following
type I elliptic hypergeometric integral for the $A_n$-root system \cite{spi:theta2}
\begin{eqnarray*}
&& I_{n}^{(m)}(s_1,\ldots,s_{n+m+2};t_1,\ldots,t_{n+m+2})
=\mu_n \int_{\T^n}\Delta_n(z;s,t;p,q) \prod_{j=1}^n\frac{dz_j}{z_j},
\end{eqnarray*}
where
$$
\Delta_n(z;s,t;p,q)=\prod_{1\leq j<k\leq n+1}
\frac{1}{\eg(z_jz_k^{-1},z_j^{-1}z_k)}\prod_{j=1}^{n+1}
\prod_{l=1}^{n+m+2}\eg(s_lz_j,t_lz_j^{-1}),
$$
and $|t_j|, |s_j|<1$,
$$
\prod_{j=1}^{n+1}z_j=1,\qquad \prod_{l=1}^{n+m+2}s_lt_l=(pq)^{m+1},\qquad
\mu_n=\frac{(p;p)_\infty^n(q;q)_\infty^n}{(n+1)!(2\pi {\textup i})^n}.
$$

We denote $T=\prod_{j=1}^{n+m+2}t_j,\, S=\prod_{j=1}^{n+m+2}s_j$,
so that $ST=(pq)^{m+1}$, and let all $|t_k|,\, |s_k|,\, |T^{\frac{1}{m+1}}/t_k|,\,
|S^{\frac{1}{m+1}}/s_k|<1$. Then \cite{rai:trans}
\begin{eqnarray}\nonumber
&&
I_{n}^{(m)}(s_1,\ldots,s_{n+m+2};t_1,\ldots,t_{n+m+2})
=\prod_{j,k=1}^{n+m+2}\eg(t_js_k) \\ && \makebox[1em]{} \times
I_{m}^{(n)}\left(\frac{S^{\frac{1}{m+1}}}{s_1},\ldots,
\frac{S^{\frac{1}{m+1}}}{s_{n+m+2}};
\frac{T^{\frac{1}{m+1}}}{t_1},\ldots,\frac{T^{\frac{1}{m+1}}}{t_{n+m+2}}
\right).
\label{A_n}
\end{eqnarray}

\begin{theorem}
The ratio
$$
\rho(z,y;s,t;p,q)=\frac{\Delta_n(z;s,t;p,q)}{\Delta_m(y;s^{-1},pqt^{-1};p,q)}
\prod_{k,r=1}^{n+m+2}\frac{1}{\eg(s_kt_r)},
$$
where $\prod_{j=1}^nz_j=1,\ \prod_{j=1}^my_j= S,$
is a totally elliptic hypergeometric term.
\end{theorem}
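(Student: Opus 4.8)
The plan is to follow the preceding ($BC_n$) proof step by step, adapting it to the $A_n$ data. First I would write $\rho$ out explicitly as a finite product of elliptic gamma functions, recording the integer monomial exponents. Since $ST=(pq)^{m+1}$ forces $S^{1/(m+1)}T^{1/(m+1)}=pq$, a rescaling of the $y_j$ (the analogue of the substitution $y_j\to\sqrt{pq}\,y_j$ used in the $BC_n$ proof) identifies $\Delta_m(y;s^{-1},pqt^{-1};p,q)$ with the dual-parameter kernel on the right of \eqref{A_n}. Hence $\rho$ is, up to this rescaling, exactly the ratio of the two integral kernels entering the transformation \eqref{A_n}, which is the structural reason total ellipticity is to be expected.

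Next I would compute the four families of $q$-certificates. The constraints force compensated shifts: a $q$-shift $z_k\to qz_k$ of an integration variable must be accompanied by $z_{k'}\to q^{-1}z_{k'}$ for some other index so as to preserve $\prod_j z_j=1$ (and similarly for the $y_j$ against $\prod_j y_j=S$), while a parameter shift $s_i\to qs_i$ (resp.\ $t_i\to qt_i$) must be balanced by $s_k\to q^{-1}s_k$ (resp.\ $t_k\to q^{-1}t_k$) so that $\prod_l s_lt_l=(pq)^{m+1}$ is preserved. Using $\eg(qx)/\eg(x)=\theta_p(x)$ repeatedly, each certificate $h_k^{(z)}$, $h_k^{(y)}$, $h_{ik}^{(s)}$, $h_{ik}^{(t)}$ collapses to a finite ratio of theta functions, exactly as in the $BC_n$ computation; moreover the $y$-certificates are equivalent to the $z$-certificates by the $n\leftrightarrow m$ duality, so only three genuinely distinct families remain.

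The verification of $p$-ellipticity in the variables $z$, $y$, $s$, $t$ is the routine part. Invariance under $z_l\to pz_l$ for $l\neq k$, and under the paired parameter moves $s_a\to ps_a,\ s_b\to p^{-1}s_b$ (and likewise for $t$), holds automatically, while the remaining cases reduce, via the quasi-periodicity $\theta_p(px)=-x^{-1}\theta_p(x)$, to bookkeeping of the integer exponents and the quadratic monomials. I expect these conditions to close just as \eqref{con1} and \eqref{con2} did in the $BC_n$ proof.

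The main obstacle is ellipticity in $q$. Unlike a pure balancing condition equal to $1$, here the balancing carries the factor $q^{m+1}$, so the replacement $q\to pq$ does not leave the theta-function arrangement invariant term by term. The key step is to eliminate one parameter through the balancing relation $\prod_l s_lt_l=(pq)^{m+1}$, substitute its value into the certificate, and then verify that the resulting ratio $h^{(\cdot)}(\dots;pq;p)/h^{(\cdot)}(\dots;q;p)$ equals $1$. Expanding the quasi-periodicity factors produces powers of $p$ of the shape $p^{m(m\pm1)/2}$ together with monomials in $z,y,s,t$; these must cancel precisely against the $q^{m+1}$ carried by the balancing. This is the exact analogue of the $BC_n$ computation $h_{ik}^{(t)}(\dots p^{m+1}t_s\dots;pq;p)=h_{ik}^{(t)}$, and it is the point where the precise exponent $m+1$ and the dual pairing $s\mapsto S^{1/(m+1)}/s$, $t\mapsto T^{1/(m+1)}/t$ must be tracked most carefully. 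Once this cancellation is confirmed for each certificate family, total ellipticity follows.
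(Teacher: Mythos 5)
Your overall strategy is the one the paper itself follows: write $\rho$ out explicitly, observe that the rescaling $y_j\to S^{1/(m+1)}y_j$ turns it into the ratio of the two kernels entering the transformation \eqref{A_n}, collapse each $q$-certificate to a finite ratio of theta functions, note that the $y$-certificates duplicate the $z$-certificates, and isolate the ellipticity in $q$ --- handled by eliminating one parameter through $\prod_l s_lt_l=(pq)^{m+1}$ and accompanying $q\to pq$ by $t_a\to p^{m+1}t_a$ --- as the delicate step. All of that matches the paper.

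The gap is in your enumeration of the certificate families. You list $h^{(z)},h^{(y)},h^{(s)},h^{(t)}$ and assume every admissible $q$-shift is a compensation \emph{within} one group ($z_k\leftrightarrow z_{k'}$, $y_i\leftrightarrow y_{i'}$, $s_a\leftrightarrow s_b$, $t_a\leftrightarrow t_b$), concluding that only three distinct families remain. But the constraint on the $y$-variables is $\prod_j y_j=S$ with $S=\prod_l s_l$ itself built from the external parameters, so a shift $y_i\to qy_i$ can equally well be compensated by $s_j\to qs_j$, which must then be compensated by $t_l\to q^{-1}t_l$ to preserve the balancing. This produces the mixing certificate
$$
h_{ijl}^{(y,s,t)}=\frac{\rho(\dots qy_i,\dots,qs_j,\dots,q^{-1}t_l,\dots)}{\rho(z,y;s,t;p,q)},
$$
which is an independent generator of the lattice of admissible shifts: it raises the total degree of $\prod_jy_j$ and of $S$ simultaneously by one, which none of your pairwise moves can do, so your list only spans the sublattice where these degrees stay fixed. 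The paper treats this mixing family separately and its invariance under $q\to pq$ (with $t_l\to p^{m+1}t_l$) is flagged there as the most complicated verification in the whole proof; omitting it means the total ellipticity of the external-variable certificates is not actually established. Once this fourth family is added and checked by the same device you describe, your argument coincides with the paper's.
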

\begin{proof}
Explicitly,
\begin{eqnarray*}
&& \rho(z,y;s,t;p,q)=\prod_{1\leq k,r\leq n+m+2}\frac{1}{\eg(s_kt_r)}
 \prod_{r=1}^{n+m+2}\frac{\prod_{j=1}^{n+1}\eg(s_rz_j,t_rz_j^{-1})}
{\prod_{j=1}^{m+1}\eg(s_r^{-1}y_j,pqt_r^{-1}y_j^{-1})}
\\ && \makebox[14em]{} \times
\frac{\prod_{1\leq i<j\leq m+1}\eg\left(y_i^{-1}y_j,y_iy_j^{-1}\right)}
{\prod_{1\leq i<j\leq n+1}\eg(z_i^{-1}z_j,z_iz_j^{-1})}.
\end{eqnarray*}
After substitution $y_j\to S^{1/(m+1)}y_j$, we see that $\rho$ becomes
equal to the ratio of kernels of the integrals occurring in relation (\ref{A_n}).

The $z_k$-variables $q$-certificates have the form
\begin{eqnarray*} && \makebox[-2em]{}
 h_{k}^{(z)}(z;s,t;q;p)=\frac{\rho(\dots qz_k\dots,q^{-1}z_{n+1},y;t;p,q)}
{\rho(z,y;t;p,q)} =\frac{\theta_p(q^{-2}z_k^{-1}z_{n+1},q^{-1}z_k^{-1}z_{n+1})}
{\theta_p(qz_kz_{n+1}^{-1},z_kz_{n+1}^{-1})}
\\ && \makebox[4em]{} \times
\prod_{i=1,\neq k}^n\frac{\theta_p(q^{-1}z_iz_k^{-1},q^{-1}z_i^{-1}z_{n+1})}
{\theta_p(z_i^{-1}z_k,z_i,z_{n+1}^{-1})}
\prod_{r=1}^{n+m+2}\frac{\theta_p(s_rz_k,t_rz_{n+1}^{-1})}
{\theta_p(q^{-1}s_rz_{n+1},q^{-1}t_rz_k^{-1})}.
\end{eqnarray*}
Direct computations yield
\begin{eqnarray}
&&
\frac{h_{k}^{(z)}(\dots pz_l, \dots, p^{-1}z_{n+1} \dots)}
{h_{k}^{(z)}(z;s,t;q;p)}
= \frac{(pq)^{m+1}}{\prod_{r=1}^{n+m+2}s_rt_r}
=1, \quad l\neq k,
\nonumber \\ &&
\frac{h_{k}^{(z)}(\dots pz_k, \dots, p^{-1}z_{n+1} \dots)}
{h_{k}^{(z)}(z;s,t;q;p)}
= \frac{(pq)^{2(m+1)}}{\prod_{r=1}^{n+m+2}s_r^2t_r^2}=1, \nonumber \\
&&
\frac{h_{k}^{(z)}(\dots ps_a, \dots, p^{-1}s_b \dots)}
{h_{k}^{(z)}(z;s,t;q;p)}= \frac{h_{k}^{(z)}(\dots ps_a, \dots, p^{-1}t_b \dots)}
{h_{k}^{(z)}(z;s,t;q;p)}=1.
\end{eqnarray}
The most complicated is the verification of the equality
$$
\frac{h_{k}^{(z)}(\dots p^{m+1}t_a \dots ;pq;p)}
{h_{k}^{(z)}(z;s,t;q;p)}=\frac{(pq)^{m+1}}{\prod_{r=1}^{n+m+2}s_rt_r}=1.
$$

The $y_k$-variables $q$-certificates are equivalent to $h_{k}^{(z)}$. Therefore
we can pass to the $t_j,s_k$-variables $q$-certificates (reduced by their
permutational symmetry)
\begin{eqnarray*}
&& h_{il}^{(t)}(z,y;s,t;q;p)=\frac{\rho(\dots qt_i,\dots,q^{-1}t_l\dots)}
{\rho(z,y;s,t;p,q)}
\\ && \makebox[4em]{}
=\prod_{k=1}^{n+m+2}\frac{\theta_p(q^{-1}s_kt_l)}
{\theta_p(s_kt_i)}\prod_{j=1}^{n+1}\frac{\theta_p(t_iz_j^{-1})}
{\theta_p(q^{-1}t_lz_j^{-1})}
 \prod_{j=1}^{m+1}\frac{\theta_p(pt_i^{-1}y_j^{-1})}
{\theta_p(pqt_l^{-1}y_j^{-1})}.
\end{eqnarray*}
This function is automatically invariant with respect to the
replacements $t_a\to pt_a$ for any $a$ and fixed other variables,
$q\to pq$ (without touching other parameters)
as well as $s_a\to ps_a,\ s_b\to p^{-1}s_b$.

The mixing $q$-certificates have the form
\begin{eqnarray*}
&& h_{ijl}^{(y,s,t)}(z,y;s,t;q;p)=\frac{\rho(\dots qy_i,\dots,qs_j,\dots
q^{-1}t_l,\dots)}
{\rho(z,y;s,t;p,q)}
\\ && \makebox[4em]{}
=\frac{\prod_{r=1,\neq j}^{n+m+2}\theta_p(q^{-1}s_rt_l)}
{\prod_{r=1,\neq l}^{n+m+2}\theta_p(s_jt_r)}
\prod_{k=1}^{n+1}\frac{\theta_p(s_jz_k)}
{\theta_p(q^{-1}t_lz_k^{-1})}
 \prod_{k=1,\neq i}^{m+1}\frac{\theta_p(q^{-1}s_j^{-1}y_k)}
{\theta_p(pqt_l^{-1}y_k^{-1})}
\\ && \makebox[6em]{} \times
\frac{\prod_{r=1,\neq l}^{n+m+2}\theta_p(pt_r^{-1}y_i^{-1})}
{\prod_{r=1,\neq j}^{n+m+2}\theta_p(s_r^{-1}y_i)}
\prod_{k=1, \neq i}^{m+1}\frac{\theta_p(y_iy_k^{-1})}
{\theta_p(q^{-1}y_i^{-1}y_k)}.
\end{eqnarray*}
The invariance of this function with respect to the transformations
described for other certificates is verified in a direct way.
E.g., the most complicated case is
\begin{eqnarray*}
&& \frac{h_{ijl}^{(y,s,t)}(\dots p^{m+1}t_l \dots ;pq;p)}
{h_{ijl}^{(y,s,t)}(z,y;s,t;q;p)}
=\frac{\prod_{k=1}^{n+1}(-q^{-1}t_lz_k^{-1})^mp^{m(m-1)/2}}
{\prod_{r=1,\neq j}^{n+m+2}(-q^{-1}s_rt_l)^mp^{m(m-1)/2}}
\\ && \makebox[4em]{}  \times
\prod_{k=1,\neq i}^{m+1}\frac{(-y_k)^{m+1}p^{m(m+1)/2}}
{(pq)^{m+1}s_j t_l^{-m}} \prod_{k=1,\neq i}^{m+1}\frac{-pqy_i}{y_k}=1.
\end{eqnarray*}
To summarize, all $q$-certificates have the symmetries
$$
z_k\to p^{n_k}z_k,\quad
y_j\to p^{m_j}y_j,\quad s_l\to p^{\alpha_l}s_l,\quad
t_l\to p^{\beta_l}t_l,\quad q\to p^Nq,
$$
where $n_k,m_j,\alpha_l,\beta_l,N\in\Z$ and $\sum_{k=1}^{n+1}n_k=0$,
$\sum_{j=1}^{m+1}m_j=\sum_{l=1}^{n+m+2}{\alpha_l},$ and
$\sum_{l=1}^{n+m+2}(\alpha_l+\beta_l)=N(m+1).$
This completes the proof of the theorem.
\end{proof}

In the univariate case, $n=1$, both type I $A_n$ and $BC_n$-integrals
coincide with the $V(t_1,\dots,t_8;p,q)$-function described in the
previous section. The corresponding symmetry transformations
are mere consequences of the $E_7$-root system transformation
found in \cite{spi:theta2}. The simplest $p\to 0$ limit of the arbitrary
rank integrals reduces them to the Gustafson integrals \cite{gus:some1}.

In joint work with Vartanov
\cite{SV,spi-var:elliptic} we gave a large list of known simple
transformation formulae for elliptic hypergeometric integrals related to the
Weyl groups (including the cases of computable integrals).
There are about seven proven different multiple elliptic beta integrals
associated to root systems and a similar number of proven symmetry
transformations for higher order elliptic hypergeometric functions.
Additionally, there are about fifteen conjectures for new elliptic beta integrals
and a similar number of new conjectured transformation formulae
for integrals with a bigger number of parameters. In particular,
there are interesting examples of elliptic hypergeometric
terms having fractional powers of $pq$ in the elliptic gamma function
arguments. In such cases the total ellipticity
condition should be modified --- it is necessary to consider dilations by
appropriate powers of $q$ (or $p$).
Using the preceding two theorems as a starting point, for all these
relations we have checked validity of the following conjecture.

\begin{conjecture} \cite{spi-var:elliptic}. The condition of total ellipticity
for elliptic hypergeometric terms is necessary
for the existence of exact evaluation formulae for elliptic beta integrals
or of nontrivial Weyl group symmetry transformations for higher order
elliptic hypergeometric functions (emerging in the ``ratio of integral kernels"
spirit, as described above).
\end{conjecture}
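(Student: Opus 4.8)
The plan is to place the ``ratio of integral kernels'' at the center of the argument and to show that the mere \emph{existence} of a relation forces all of its $q$-certificates to be totally $p$-elliptic, i.e. $p$-elliptic in every integration variable, every external parameter, and in $q$. First I would fix the set-up uniformly for both cases in the statement: given either an evaluation $\kappa_n\int\Delta\,\prod dz_j/z_j=\mathrm{const}$ or a transformation $I=\Phi\cdot\tilde I$ in which $\Phi$ is an explicit product of elliptic gamma functions (as in \eqref{trafo} and \eqref{A_n}), form the ratio $\rho=\Phi\,\Delta/\tilde\Delta$ exactly as in the two preceding theorems. The target is then to deduce, from the validity of the relation for generic incommensurate $(p,q)$ alone, that $\rho$ satisfies the definition of a totally elliptic hypergeometric integral (Definition~7), i.e. that each certificate $h_j$ is totally $p$-elliptic.

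The structural input I would use is the holonomic rigidity already invoked above: in every parameter $t_k$ the integral obeys the matrix system $M(qt_k)=A(t_k)M(t_k)$, $M(pt_k)=M(t_k)B(t_k)$ with $A$ of $p$-elliptic and $B$ of $q$-elliptic entries, which together with meromorphicity pins down $M$ up to a constant. I would argue that a nontrivial relation between two such integrals is precisely an identification of the corresponding rank-one quotients of these solution modules, and that transporting the first-order certificates of $\tilde\Delta$ through $\Phi$ onto those of $\Delta$ forces the certificates of $\rho$ to inherit $p$-ellipticity in the $z_j,y_j$ and in the $t_r,s_r$. Concretely this should reproduce the balancing and Diophantine constraints \eqref{con1}--\eqref{con3}, in their $(pq)$-twisted form visible in the explicit computations above, but now \emph{derived} from the relation rather than imposed.

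The hard part, and the reason the statement is only a conjecture, is ellipticity in $q$ itself: one must show that a transformation valid for a two-parameter family $(p,q)$ on equal footing cannot tolerate a certificate whose $q$-dependence is non-elliptic. Here I would exploit the $p\leftrightarrow q$ symmetry of $\eg$ together with the incommensurability $p^n\neq q^m$, coupling the $q$-difference system to its $p$-difference partner and invoking the modular-invariance theorem of Rains and Spiridonov: since for totally elliptic terms the certificates are automatically $SL(2,\Z)$-invariant, any failure of $q$-ellipticity would surface as a non-cancelling $B_{3,3}$-contribution under the modular transform, which a parameter-independent integral identity cannot carry. Converting this heuristic into a proof requires a genuine \emph{non-existence} principle --- that if the balancing condition involved $p$ and $q$ asymmetrically, the two-shift rigid system above would admit no solution compatible with both shifts --- and it is exactly this passage from rigidity to non-existence, in place of the case-by-case verifications of the two theorems above, that I expect to be the main obstacle.

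Finally, once $p$-ellipticity in all of $z_j,y_j,t_r,s_r$ and ellipticity in $q$ of every certificate (the integration, external, and mixing ones) have been secured, the ratio $\rho$ meets Definition~7 verbatim and the necessity of total ellipticity follows. The proposal thus reduces the conjecture to a single clean rigidity principle for the two-shift matrix system; establishing that principle in the parameter-free generality the statement demands is the crux.
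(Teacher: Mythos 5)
There is no proof to compare against: the paper states this as an open conjecture, supported only by case-by-case verification of the known evaluation and transformation formulae (the two theorems on the $BC_n$ and $A_n$ kernel ratios being the prototypes), and your proposal does not close that gap. The decisive step --- deducing from the mere \emph{existence} of an identity that the certificates of the kernel ratio $\rho=\Phi\,\Delta/\tilde\Delta$ are $p$-elliptic in all variables \emph{and in $q$} --- is asserted, not established. The rigidity input you invoke runs in the wrong direction: the two-shift system $M(qt_k)=A(t_k)M(t_k)$, $M(pt_k)=M(t_k)B(t_k)$ pins down $M$ up to a constant \emph{once one already has} recurrences with $p$-elliptic and $q$-elliptic coefficients; it does not show that an arbitrary identity between integrals must be governed by such a system, which is exactly what necessity requires. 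Likewise the modular-invariance theorem of Rains--Spiridonov is an implication from total ellipticity to modular invariance, so using it to rule out non-elliptic $q$-dependence is circular unless you independently prove the converse. You acknowledge this yourself when you say the crux is a ``non-existence principle'' you have not proved --- which means the proposal is a programme, not a proof.

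Two further concrete obstacles. First, the conjecture quantifies over all elliptic beta integrals and Weyl-group transformations, a class that is not classified (the paper lists roughly fifteen conjectural new integrals), so any argument restricted to kernels of the form \eqref{rs-term} or to the specific $BC_n$/$A_n$ families cannot establish necessity in the stated generality. Second, the paper explicitly notes terms with fractional powers of $pq$ in the gamma-function arguments for which the total ellipticity condition ``should be modified'' (dilations by powers of $q$ rather than $p$); your conclusion that $\rho$ ``meets Definition~7 verbatim'' is therefore false as stated for part of the intended scope, and any correct proof would have to build that modification in from the start.
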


Most of the new relations for integrals were inspired by the
Seiberg dualities for ${\mathcal N}=1$ four dimensional
supersymmetric field theories, where elliptic hypergeometric
integrals play the role of superconformal (topological) indices with an
appropriate matrix integral representation. For further details, see
\cite{spi-var:elliptic}. A few more identities are conjectured in recent
interesting papers by Gadde et al \cite{gprr,grry} as a consequence of
known dualities for ${\mathcal N}=2$ and ${\mathcal N}=4$
four dimensional superconformal
field theories. Here it should be stressed
that there are actually infinitely many symmetry relations for
elliptic hypergeometric integrals generated in a recursive way
from some canonical exact formulae, see \cite{spi:bint}
for a tree of identities following from the univariate elliptic
beta integral and its root system generalizations
following from the integral transforms of \cite{sw}. Therefore
many identities are, in fact, relatively simple consequences of some
universal relations, whose enumeration would be of most interest.

\medskip
{\bf Acknowledgments.} The author is indebted to C. Krattenthaler,
E.~M. Rains, G.~S. Vartanov for valuable discussions and collaboration
and to L.~Di Vizio and T. Rivoal for invitation to this workshop and
kind hospitality. The author is partially supported by the RFBR grants
08-01-00392 and 09-01-93107-NCNIL-a, and the Max Planck Institute for
Mathematics (Bonn, Germany) where this paper was completed.

\medskip

\end{document}